\definecolor{mygray}{gray}{0.8}
\newtheorem{theorem}{Theorem}[section]
\newtheorem{lemma}[theorem]{Lemma}
\newtheorem{corollary}[theorem]{Corollary}
\newtheorem{proposition}[theorem]{Proposition}
\theoremstyle{definition}
\newtheorem{definition}[theorem]{Definition}
\newtheorem{remark}[theorem]{Remark}
\newtheorem{example}[theorem]{Example}
\begin{document}
	
	
\title[One-sided Muckenhoupt weights and one-sided weakly porous sets in $\mathbb{R}$]{One-sided Muckenhoupt weights and one-sided weakly porous sets in $\mathbb{R}$}
	
\author[Hugo Aimar]{Hugo Aimar}
\author[Ivana G\'{o}mez]{Ivana G\'{o}mez}
\author[Ignacio G\'{o}mez Vargas]{Ignacio G\'{o}mez Vargas}
\author[Francisco Mart\'{i}n-Reyes]{Francisco Javier Mart\'{i}n-Reyes}
\subjclass[2020]{28A80, 28A75, 42B37} 
\keywords{one-sided $A_1$ Muckenhoupt weights; weak porosity of sets}
\begin{abstract}
  In this work, we introduce the geometric concept of one-sided weakly porous sets in the real line and show that a set $E\subset\mathbb{R}$ satisfies $d(\cdot,E)^{-\alpha}\in A_1^+(\mathbb{R})\cap L^1_\textrm{loc}(\mathbb{R})$ for some $\alpha>0$ if and only if $E$ is right-sided weakly porous. Furthermore, we find that the property of being both left-sided and right-sided weakly porous is equivalent to the recent weakly porous condition discussed in the bibliography, which, in turn, was previously found to be intimately related to the usual class of Muckenhoupt weights $A_1$.
\end{abstract}

\maketitle

\section{Introduction}
\label{section1}

The class of Muckenhoupt weights in $\mathbb{R}^n$, and even in some more general settings, remains one of the most studied density classes in the field of harmonic analysis due to its applications in the search of bounds for several types of operators and their role in the analysis of the regularity of solutions in PDE, see \cite{GarciaCuervaRubioFranciaBook}, \cite{LOPEZ} and \cite{DYDA}. Despite the mathematical community's interest in this particular topic, less attention has been paid to the relations of Muckenhoupt weights with the geometry of subsets. In this sense, a recent series of articles seems to have shed light on the problem of finding those sets $E$ for which there exists some $\alpha>0$ such that the power $d(\cdot,E)^{-\alpha}$ belong to the $A_1$ Muckenhoupt class in different settings \cite{ANDERSON,MUDARRA,NOSOTROS}. One can think of the origin of this problem as a natural question arising from the well-known fact that $|x|^{-\alpha}$ belongs to $A_p(\mathbb{R}^n)$ for every $-n<\alpha<n(p-1)$ and $1<p<\infty$, where $|x|$ can be seen as the distance function $d(x,\{0\})$.

It is essential to mention that the authors in \cite{ANDERSON} have found a necessary and sufficient condition on a set $E$ to achieve $d(\cdot,E)^{-\alpha}\in A_1(\mathbb{R}^n)$ for some $\alpha>0$ and named it \textit{weak porosity}, in line with the concept first introduced in \cite{VASIN} to study classes of weights and BLO functions in the torus. Roughly speaking, and restricting ourselves to the one-dimensional case, a weakly porous set $E\subset\mathbb{R}$ is a set such that all open intervals $I$ contain a finite quantity of sub-intervals $I_1,\ldots,I_N$ which do not intersect $E$ and whose lengths sum at least a fixed proportion of the number $|I|$, standing for the Lebesgue measure of $I$. Intuitively, one can think of $E$ as a set that is ``full of pores everywhere'' with respect to both the Euclidean distance and the Lebesgue measure, in contrast with other more common definitions of porosity in metric spaces which relate only to the metric and have no relation at all with any existing measure \cite{FALCONER,SHMERKIN}. An interesting property of any weakly porous set $E$, as defined in \cite{ANDERSON}, is the ``isotropic'' distribution of the sizes of pores. More precisely, given three consecutive intervals $I$, $J$, and $K$ of the same length, if $E$ has a pore of length $\rho$ in $J$, then $E$ has at least one pore of comparable size to $\rho$ both in $I$ and $K$.

Sawyer first introduced one-sided Muckenhoupt weights in \cite{SAWYER} and were further characterized in \cite{KICO}, among other works. The basic idea behind its conception lies in their property of inducing measures for which the one-sided versions of the Hardy-Littlewood maximal function become bounded operators. Overall, the one-sided Muckenhoupt classes present many similarities with respect to the usual ``two-sided'' classes $A_p(\mathbb{R})$, but also big differences. It is for this reason that we aimed to determine when powers of distance functions belong to the one-sided Muckenhoupt classes $A^-_1(\mathbb{R})$ or $A^+_1(\mathbb{R})$ by proposing one-sided versions of the geometric concept of weak porosity. We will show that one-sided weakly porous sets share a lot of properties with standard weakly porous sets as given in \cite{ANDERSON} but usually exhibit an anisotropy in its pore sizes distribution in a biased way towards one side, which could explain the intrinsic relationship to be established between them and the condition $d(\cdot,E)^{-\alpha}\in A_1^-(\mathbb{R})$ or $d(\cdot,E)^{-\alpha}\in A_1^+(\mathbb{R})$, as appropriate. Furthermore, after providing the proof of Theorem \ref{theorem4.1}, the main result of this paper, the following consistency diagram can be obtained

%

\begin{figure}[h]
	 \includegraphics[width=\textwidth]{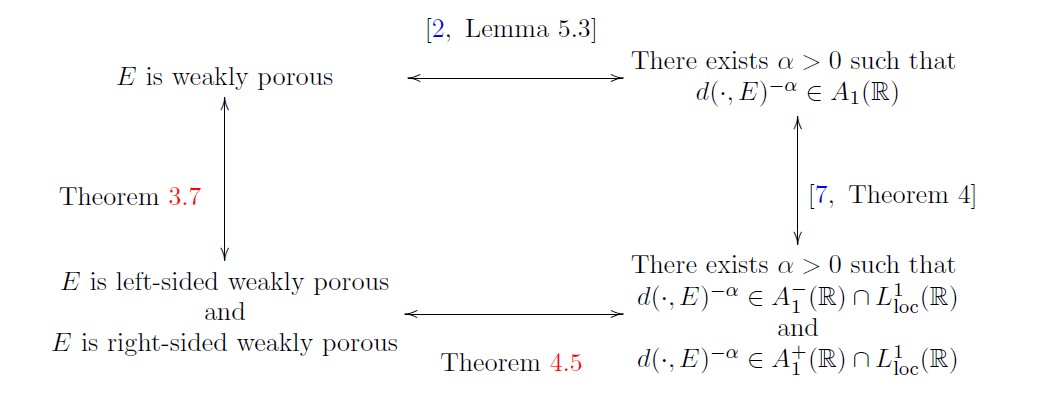}
\end{figure}

\vspace*{0.5cm}
\noindent where each arrow indicates an if and only if equivalence between the statements.

This paper is organized as follows. Section~\ref{section2} is devoted to introduce the basic theory of one-sided Muckenhoupt weights, together with some specific characterization of them to be used in the rest of these pages. In Section~\ref{section3} we present our definition of one-sided weakly porous sets and compare it with the one given by Anderson \textit{et al.} in \cite{ANDERSON}. Finally, in Section~\ref{section4} we establish the relationship between one-sided weights and one-sided weakly porous sets.

\section{One-sided Muckenhoupt weights}
\label{section2}

A function $w:\mathbb{R}\to\mathbb{R}$ is called a weight in $\mathbb{R}$ if it is measurable and non-negative. A particular and largely studied class of weights in $\mathbb{R}$ is the $A_p(\mathbb{R})$ Muckenhoupt class for $1\leq p<\infty$, see \cite{GarciaCuervaRubioFranciaBook}. These weights have the defining property of making the Hardy-Littlewood maximal operator $M$ of weak type $(p,p)$, meaning there is some constant $C>0$ such that
$$w(\{x\in\mathbb{R}:Mf(x)>t\})\leq\frac{C}{t^p}\int_{\mathbb{R}}|f(x)|^pw(x)dx,\text{ for every }f\in L^1_\text{loc}(\mathbb{R}),$$
where $w(A)$ denotes the quantity $\int_Aw(x)dx$ for a given Lebesgue measurable set $A$ and $Mf(x)=\sup_{h>0}\frac{1}{2h}\int_{x-h}^{x+h}|f(y)|dy$, with $f\in L^1_\text{loc}(\mathbb{R})$. The actual definition of Muckenhoupt weights requires them to be locally integrable weights for which there exists a constant $C>0$ such that
\begin{equation}
    \label{eq2.1}
    \Big(\frac{1}{|I|}\int_Iw(x)dx\Big)\Big(\frac{1}{|I|}\int_Iw(x)^{-\frac{1}{p-1}}dx\Big)^{p-1}\leq C,\textrm{ for every interval }I
\end{equation}
in the case $1<p<\infty$, whereas for $p=1$ the $A_1(\mathbb{R})$ condition reads
\begin{equation}
    \label{eq2.2}
    \frac{1}{|I|}\int_Iw(x)dx\leq C\:\textrm{inf ess}_{x\in I}w(x),\textrm{ for every interval }I.
\end{equation}

As discussed in the introduction, all concepts introduced so far have natural counterparts in the one-sided case. Indeed, starting with the maximal operator $M$ we can define new operators $M^-f(x)=\sup_{h>0}\tfrac{1}{h}\int_{x-h}^x|f(y)|dy$ and $M^+f(x)=\sup_{h>0}\tfrac{1}{h}\int_x^{x+h}|f(y)|dy$. On the other hand, right-sided Muckenhoupt weights for $1<p<\infty$ are introduced by replacing \eqref{eq2.1} with
\begin{equation}
    \label{eq2.3}
    \sup_{h>0}\Big(\frac{1}{h}\int_{x-h}^xw(y)dy\Big)\Big(\frac{1}{h}\int_x^{x+h}w(y)^{-\frac{1}{p-1}}dy\Big)^{p-1}\leq C,\textrm{ for every }x\in\mathbb{R}
\end{equation}
and \eqref{eq2.2} with
\begin{equation}
    \label{eq2.4}
    M^-w(x)\leq Cw(x),\textrm{ for almost every }x\in\mathbb{R}
\end{equation}
in the case $p=1$. Right-sided Muckenhoupt weight classes are denoted as $A_p^+(\mathbb{R})$. Left-sided Muckenhoupt weight classes $A_p^-(\mathbb{R})$ are defined analogously.

\begin{remark}
    \label{rem1}
    Regular and one-sided Muckenhoupt classes are related by the formula $A_p(\mathbb{R})=A_p^-(\mathbb{R})\cap A_p^+(\mathbb{R})$ (see \cite[Theorem~4]{KICO}).
\end{remark}

Given $w\in A_1^+(\mathbb{R})$, it is possible to find points $-\infty\leq x_w^0\leq x_w^1\leq\infty$ such that
    \begin{equation}
        \begin{cases}
        \label{eq2.5}
       w(x) = 0 & x\in(-\infty,x_w^0],\\
       0<w(x)<\infty &x\in (x_w^0,x_w^1), \\
       w(x) = \infty &x\in [x_w^1,\infty). \end{cases}
       \end{equation}
Namely, set $x_w^0=\sup\{x\in\mathbb{R}:\int_{-\infty}^xw(y)dy=0\}$ and $x_w^1=\text{inf ess}\{x\in\mathbb{R}:w(x)=\infty\}$. In particular, $w$ is locally integrable if and only if $x_w^1=\infty$. A certainly useful and alternative characterization of functions in $A_1^+(\mathbb{R})$ (which will be used extensively in the next sections) is presented in item ii of the following proposition. Before this, we introduce the notation $I^-:=I\cap[a,\frac{a+b}{2})$ and $I^+:= I\setminus I^-$ in reference to an arbitrary interval $I$ with endpoints $a<b$.

\begin{proposition}
    Given a weight $w$ in $\mathbb{R}$, the following statements are equivalent.
    \begin{enumerate}[i.]
        \item $w$ belongs to $A_1^+(\mathbb{R})$.
        \item  There exists a constant $C>0$ such that
        \begin{equation}
            \label{eq2.0}
            \frac{1}{|I^-|}\int_{I^-}w(y)dy\leq C\:\textrm{ess inf}_{x\in I^+}w(x),\text{ for every open interval }I\subset\mathbb{R}.
        \end{equation}
        \item There exists a constant $C>0$ such that for every triple $a<b<c$ we have
        $$\frac{1}{c-a}\int_a^bw(y)dy\leq C\:\textrm{ess inf}_{x\in(b,c)}w(x).$$
    \end{enumerate}
\end{proposition}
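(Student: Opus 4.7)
The plan is to use condition (iii) as a bridge, establishing (i) $\Leftrightarrow$ (iii) and (ii) $\Leftrightarrow$ (iii) separately. The first equivalence is a reformulation of the $A_1^+$ condition based on Lebesgue's differentiation theorem, while the second is a purely geometric manipulation of intervals.

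For (i) $\Rightarrow$ (iii), fix $a < b < c$. For almost every $x \in (b,c)$ the inequality $M^-w(x) \leq Cw(x)$ holds, and choosing the radius $h = x - a$ yields
$$\frac{1}{c-a}\int_a^b w(y)\,dy \;\leq\; \frac{1}{x-a}\int_a^x w(y)\,dy \;\leq\; M^-w(x) \;\leq\; Cw(x),$$
so the essential infimum over $x \in (b,c)$ gives (iii). Conversely, for (iii) $\Rightarrow$ (i), fix $h > 0$ and a point $x$ at which $(c-x)^{-1}\int_x^c w \to w(x)$ as $c \to x^+$ (a full-measure condition, by Lebesgue's differentiation theorem). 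Applying (iii) to the triple $(x-h, x, c)$ with arbitrary $c > x$ and using $\operatorname{ess\,inf}_{(x,c)} w \leq (c-x)^{-1}\int_x^c w$, letting $c \to x^+$ collapses the left side to $h^{-1}\int_{x-h}^x w$ and the right side to $Cw(x)$. Taking the supremum in $h$ yields (i).

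For (iii) $\Rightarrow$ (ii), given an interval $I = (a,b)$ with midpoint $m$, condition (iii) applied to $(a, m, b)$ directly gives $(b-a)^{-1}\int_a^m w \leq C\operatorname{ess\,inf}_{(m,b)} w$, which is (ii) up to the factor $2 = (b-a)/|I^-|$. The converse (ii) $\Rightarrow$ (iii) requires a bit more care: naively applying (ii) to $I = (a,c)$ fails because its midpoint is generally not $b$. The remedy is to choose the interval of half-length $L = \max(b-a, c-b)$ centered at $b$, namely $I = (b-L, b+L)$. If $b - a \geq c - b$, then $I^- = (a,b)$ exactly and $I^+ \supseteq (b,c)$, so (ii) yields $(b-a)^{-1}\int_a^b w \leq C\operatorname{ess\,inf}_{(b,c)} w$, and (iii) follows since $c-a \geq b-a$. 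If instead $c - b > b - a$, then $(a,b) \subseteq I^- = (2b-c, b)$ with $|I^-| = c-b$, and (ii) gives $\int_a^b w \leq \int_{I^-} w \leq C(c-b)\operatorname{ess\,inf}_{(b,c)} w$, again yielding (iii).

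The main obstacle I anticipate is the direction (iii) $\Rightarrow$ (i): the $A_1^+$ condition requires $M^-w(x) \leq Cw(x)$ to hold simultaneously for all $h > 0$ at a.e. $x$, whereas the natural argument produces, for each fixed $h$, only an a.e. control of $h^{-1}\int_{x-h}^x w$ by $Cw(x)$, with the exceptional null set potentially depending on $h$. The remedy is to restrict from the start to the full-measure set of right Lebesgue points of $w$, at which the passage $c \to x^+$ works uniformly and the inequality can then be taken for every $h$ at once. The combinatorial step (ii) $\Rightarrow$ (iii) is routine once the centering trick $I = (b-L, b+L)$ is identified.
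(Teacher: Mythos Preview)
Your overall strategy is sound but differs from the paper's: the paper argues cyclically $(i)\Rightarrow(ii)\Rightarrow(iii)\Rightarrow(i)$, whereas you use $(iii)$ as a hub. In particular, your $(ii)\Rightarrow(iii)$ via the centred interval $I=(b-L,b+L)$ with $L=\max(b-a,c-b)$ is cleaner than the paper's argument, which in the case $c-b>b-a$ introduces a dyadic sequence $x_n=\tfrac{a+x_{n-1}}{2}$ to locate the sub-interval where the essential infimum is attained. Your symmetric centring trick avoids that bookkeeping entirely and is worth keeping.

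There is, however, a genuine gap in your $(iii)\Rightarrow(i)$. You invoke Lebesgue's differentiation theorem to assert that the set of points $x$ where $(c-x)^{-1}\int_x^c w\to w(x)$ has full measure, but that theorem requires $w\in L^1_{\mathrm{loc}}$, which is \emph{not} part of the hypothesis --- a weight here is only assumed measurable and non-negative, and the paper explicitly allows $w$ to be $+\infty$ on a half-line (see the structure described around \eqref{eq2.5}). The paper closes this gap by truncating: one sets $w_N=\min\{w,N\}$, checks that $w_N$ still satisfies $(iii)$ with a constant independent of $N$, applies Lebesgue differentiation to the bounded (hence locally integrable) $w_N$ to get $\tfrac{1}{x-a}\int_a^x w_N\le \tilde C\,w_N(x)$, and then sends $N\to\infty$ by monotone convergence. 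An alternative fix is to observe that $(iii)$ together with $w<\infty$ a.e.\ forces $\int_a^b w<\infty$ for every $a<b$ (take any $c>b$; the essential infimum over $(b,c)$ is finite), so $w$ is locally integrable on the region where it is a.e.\ finite, and on the complementary region $M^-w\le Cw$ holds trivially. Either way, the passage through Lebesgue points needs this extra step, and you should make it explicit.
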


\begin{proof}
    \noindent ($i\implies ii$) Assume $w$ satisfies \eqref{eq2.4} with constant $C>0$. Let $A=\{x\in\mathbb{R}:M^-w(x)>C\:w(x)\}$ and set $\tilde{w}(x)=w(x)$ for $x\notin A$ and $\tilde{w}(x)=\infty$ otherwise. Clearly, $w(x)=\tilde{w}(x)$ a.e., $M^-w(x)=M^-\tilde{w}(x)$ and $M^-\tilde{w}(x)\leq C\tilde{w}(x)$ for every $x\in\mathbb{R}$. Thus, $\tilde{w}$ satisfies the $A_1^+(\mathbb{R})$ condition on every point with constant $C$ and, if $I$ is an open interval with $|I|=h$, it follows that for every $x\in I^+$
    \begin{align*}
        \frac{1}{|I^-|}\int_{I^-}w(y)dy=\frac{1}{|I^-|}\int_{I^-}\tilde{w}(y)dy =\frac{2}{h}\int_{I^-}\tilde{w}(y)dy \leq\frac{2}{h}\int_{x-h}^x\tilde{w}(y)dy\leq 2C\Tilde{w}(x).
    \end{align*}
    Consequently
    \begin{align*}
        \frac{1}{|I^-|}\int_{I^-}w(y)dy\leq 2C\:\text{inf ess}_{x\in I^+}\tilde{w}(x)= 2C\:\text{inf ess}_{x\in I^+}w(x)
    \end{align*}
    and the result follows by renaming $C$ appropriately.

    \noindent ($ii\implies iii$) Suppose first that $c-b\leq b-a$. If we set $I=(a,2b-a)$, then $I^-=(a,b)$ and $(b,c)\subset I^+$, thus
    \begin{align*}
        \frac{1}{c-a}\int_a^b w(y)dy &= \frac{|I^-|}{c-a}\frac{1}{|I^-|}\int_{I^-}w(y)dy \\
        &\leq C\frac{|I^-|}{c-a}\text{ess inf}_{x\in I^+}w(x)
        \leq C\:\text{ess inf}_{x\in I^+}w(x)
        \leq C\:\text{ess inf}_{x\in (b,c)}w(x).
    \end{align*}
    Consider now the case $c-b>b-a$. Take $x_0=c$ and define the sequence $\{x_n\}_{n\in\mathbb{N}}$ by setting $x_n=\frac{a+x_{n-1}}{2}$ for each $n\in\mathbb{N}$. There exists then some natural number $N$ such that $x_{N+1}\leq b<x_N<x_{N-1}<\ldots<x_0=c.$ Let us denote, for convenience, $y_n=x_n$ for $0\leq n\leq N$ and $y_{N+1}=b$. We have that $\text{inf ess}_{x\in(b,c)}w(x)=\text{inf ess}_{x\in(y_k,y_{k-1})}w(x)$, for some $k\in\{1,\ldots,N+1\}$. Also we note that $y_{k-1}-y_k\leq a-y_k$, so if $I$ is an interval such that $I^-=(a,y_k)$, then $(y_{k-1},y_k)\subset I^+$ and we can write
    \begin{align*}
        \frac{1}{c-a}\int_a^bw(y)dy &\leq \frac{1}{c-a}\int_a^{y_k}w(y)dy \\ 
        &\leq \frac{1}{y_k-a}\int_a^{y_k}w(y)dy\leq C\:\text{inf ess}_{x\in(y_{k-1},y_k)}w(x)
        = C\:\text{inf ess}_{x\in(b,c)}w(x).
    \end{align*}

    \noindent ($iii\implies i$) Given $N\in\mathbb{N}$, take
    $w_N(x)= 
      \min\{w(x),N\}$. Using the estimates $w_N(x)\leq w(x)$ and $w_N(x)\leq N$ it is easy to see that $\frac{1}{c-a}\int_a^bw_N(y)dy\leq \tilde{C}\min\{N,\text{inf ess}_{x\in(b,c)}w(x)\}$ for every triple $a<b<c$, where the constant  $\tilde{C}=\min\{1,C\}$. If $b$ and $c$ are such that $\text{inf ess}_{x\in(b,c)}w(x)\geq N$ and we take $A$ to be the set $\{x\in(b,c):w(x)<N\}$, then $|A|=0$.  This means that
    \begin{align*}
        \frac{1}{c-a}\int_a^bw_N(y)dy \leq \tilde{C}N= \tilde{C}\:\text{inf ess}_{x\in(b,c)\setminus A}w_N(x)= \tilde{C}\:\text{inf ess}_{x\in(b,c)}w_N(x).
    \end{align*}
    \noindent On the other hand, if $\text{inf ess}_{x\in(b,c)}w(x)<N$, then \begin{align*}
        \frac{1}{c-a}\int_a^bw_N(y)dy &\leq \tilde{C}\:\text{inf ess}_{x\in(b,c)}w(x)= \tilde{C}\:\text{inf ess}_{x\in(b,c)}w_N(x).
    \end{align*}
    We have seen that $w_N$ satisfies item iii with the same constant $\tilde{C}$ as $w$, independently from the value of $N$. Furthermore, since it is bounded, $w_N\in L^1_\text{loc}(\mathbb{R})$ and $\frac{1}{h}\int_x^{x+h}w_N(y)dy\to w_N(x)\text{ as $h\to0^+$}$ for almost every $x$ by the Lebesgue Differentiation Theorem. In consequence, if $x>a$ and $h>0$,
    \begin{align*}
        \frac{1}{x+h-a}\int_a^x w_N(y)dy\leq \tilde{C}\:\text{inf ess}_{y\in(x,x+h)}w_N(y)\leq \frac{\tilde{C}}{h}\int_x^{x+h}w_N(y)dy.
    \end{align*}
    \noindent By taking the limit as $h\to0^+$ we get $\frac{1}{x-a}\int_a^x w_N(y)dy \leq\tilde{C}\:w_N(x),$ and then by taking the limit once more as $N\to\infty$ and by monotone convergence we get $\frac{1}{x-a}\int_a^x w(y)dy \leq \tilde{C}\:w(x)$, which implies i.
\end{proof}



\section{Weak porosity and one-sided weak porosity in $\mathbb{R}$}
\label{section3}

As already mentioned, the concept of weak porosity has already been studied in various settings such as Euclidean spaces \cite{ANDERSON}, metric spaces with doubling measures \cite{MUDARRA}, and spaces of homogeneous type \cite{NOSOTROS} with minor variations in its phrasing. Definitions to be introduced here were chosen to be in analogy with the ones in the latter work, which encompasses the more general case. Recall that, for an arbitrary interval $I$ with endpoints $a<b$, we denote its left and right splits as $I^-=I\cap[a,\frac{a+b}{2})$ and $I^+= I\setminus I^-$.

\begin{definition}
    \label{def3.1}
    Let $E\subset\mathbb{R}$ be a non-empty set and $I$ be an arbitrary interval with endpoints $a<b$. We denote
    $$\Lambda(I)=\{s>0:\exists\: y\in I\text{ such that }(y-s,y+s)\subset I\setminus E\}.$$
    We now introduce the $E$\textbf{-free maximal hole function} as $\rho_E(I):=\sup \Lambda(I)$. In case $\Lambda(I)=\emptyset$, we set $\rho_E(I)=0$.
\end{definition}

When the context is clear, we drop the subindex $E$ from the maximal hole function $\rho$ associated with this set. Intuitively, the function $\rho$ applied to $I$ returns the ``radius'' of the greatest pore in $I$ with respect to $E$. It is not hard to realize that the supremum used in the definition of $\rho$ can actually be replaced by a maximum.

\begin{remark}
    A common definition of porous sets $E$ in a metric space $(X,d)$ requires the existence of a constant $0<\beta(E)<1$ such that for every ball $B(x,r)$ in $X$, there exists some $y\in X$ which satisfies $B(y,\beta r)\subset B(x,r)\setminus E$. In the context of $\mathbb{R}$ and having Definition~\ref{def3.1} in mind, this could be rephrased as the requirement that $\rho_E(I)\geq\frac{\beta}{2}|I|$ for every interval $I\subset\mathbb{R}$ and some $0<\beta(E)<1$. 
\end{remark}

\begin{definition}
    \label{def3.2}
    We say that a given non-empty set $E\subset\mathbb{R}$ is $\boldsymbol{(\sigma,\gamma)}$\textbf{-weakly porous} (or $(\sigma,\gamma)$-w.p. for short) if there are constants $\sigma,\gamma\in(0,1)$ such that for every open interval $I$ we can find a collection of pairwise disjoint open intervals $I_1,\ldots,I_N$ (where $N=N(I)$) such that
\begin{itemize}
    \item[\textit{i.}] $I_i\subset I\setminus E$ for every $i=1,\ldots,N$;
    \item[\textit{ii.}] $|I_i|\geq2\gamma\rho(I)$; 
    \item[\textit{iii.}] $\sum_{i=1}^N |I_i|\geq \sigma|I|$.
\end{itemize}
\end{definition}

Thanks to the previous remark, one can quickly recognize that weak porosity is, in fact, a weaker condition than usual porosity. A few changes in the definition are enough to obtain an appropriate right-sided version of this concept.

\begin{definition}
    \label{def3.3}
    We say that a given non-empty set $E\subset\mathbb{R}$ is $\boldsymbol{(\sigma,\gamma)}$\textbf{-right-sided weakly porous} if there are constants $\sigma,\gamma\in(0,1)$ such that for every open interval $I$ we can find a collection of pairwise disjoint open intervals $I_1,\ldots,I_N$ (where $N=N(I)$) such that
\begin{itemize}
    \item[\textit{i.}] $I_i\subset I^-\setminus E$ for every $i=1,\ldots,N$;
    \item[\textit{ii.}] $|I_i|\geq2\gamma\rho(I^+)$; 
    \item[\textit{iii.}] $\sum_{i=1}^N |I_i|\geq \sigma|I^-|$.
\end{itemize}
\end{definition}

An immediate consequence of a set $E$ being $(\sigma,\gamma)$-right-sided weakly porous is that $\rho(I^-)\geq\gamma\rho(I^+)$ for every open interval $I$. In analogy to Definition \ref{def3.3}, a set $E\in\mathcal{P}(\mathbb{R})\setminus\{\emptyset\}$ is said to be $\boldsymbol{(\sigma,\gamma)}$\textbf{-left-sided weakly porous} if $E$ satisfies the previous conditions replacing i-iii with
\begin{itemize}
    \item[\textit{i'.}] $I_i\subset I^+\setminus E$ for every $i=1,\ldots,N$;
    \item[\textit{ii'.}] $|I_i|\geq2\gamma\rho(I^-)$; 
    \item[\textit{iii'.}] $\sum_{i=1}^N |I_i|\geq \sigma|I^+|$.
\end{itemize}

We write $(\sigma,\gamma,+)$-w.p. and $(\sigma,\gamma,-)$-w.p. to refer to $(\sigma,\gamma)$-right-sided weakly porous sets and $(\sigma,\gamma)$-left-sided weakly porous sets, respectively. Also, we will sometimes omit any reference to the constants $\sigma$ and $\gamma$ to lighten up the notation. Before dwelling on any examples, we aim to characterize the family of sets complying with Definition~\ref{def3.3} with some basic results. At this point, it is worthy to mention that statements to be made from now on will generally focus on right-sided weakly porous sets, but can easily be rewritten in terms of left-sided ones by analogy.

\begin{proposition}
    \label{prop3.1}
    Let $E\subset\mathbb{R}$ be a non-empty set.
    \begin{enumerate}[i.]
        \item $E$ is $(\sigma,\gamma,+)$-w.p. iff $\bar{E}$ is $(\sigma,\gamma,+)$-w.p.
        \item If $E$ is right-sided w.p., then $|E|=0$.
    \end{enumerate}
\end{proposition}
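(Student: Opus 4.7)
The plan for part (i) is to show that both the maximal hole function of Definition~\ref{def3.1} and the admissible pore collections of Definition~\ref{def3.3} are invariant under replacing $E$ by $\bar{E}$. The key (purely topological) observation is that an open interval $J\subset\mathbb{R}$ satisfies $J\cap E=\emptyset$ if and only if $J\cap\bar{E}=\emptyset$: the implication from $\bar{E}$ to $E$ is trivial from $E\subset\bar{E}$, and the reverse holds because any open set contained in $\mathbb{R}\setminus E$ is automatically contained in the interior of $\mathbb{R}\setminus E$, which equals $\mathbb{R}\setminus\bar{E}$. Applied to the pores $(y-s,y+s)$ in Definition~\ref{def3.1}, this gives $\Lambda_E(I)=\Lambda_{\bar{E}}(I)$ and hence $\rho_E(I)=\rho_{\bar{E}}(I)$ for every interval $I$. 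Applied to the candidate intervals $I_i$ in Definition~\ref{def3.3}, it shows that the very same family witnesses the $(\sigma,\gamma,+)$-weak porosity of $E$ if and only if it witnesses that of $\bar{E}$, with identical constants.

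For part (ii), the plan is a contradiction argument based on the Lebesgue density theorem. Assume $|E|>0$; then there exists $x_0\in E$ with
\[
\lim_{h\to 0^+}\frac{|E\cap(x_0-h,x_0+h)|}{2h}=1.
\]
I would test the $(\sigma,\gamma,+)$-w.p.\ condition on the symmetric open intervals $I=(x_0-h,x_0+h)$, whose splits are $I^-=(x_0-h,x_0)$ and $I^+=[x_0,x_0+h)$, both of length $h$. Condition iii of Definition~\ref{def3.3} provides pairwise disjoint open intervals in $I^-\setminus E$ of total length at least $\sigma h$, so $|I^-\cap E|\leq(1-\sigma)h$. On the other hand, the trivial bound $|I^+\cap E|\leq h$ combined with the density estimate yields $|I^-\cap E|\geq |I\cap E|-h>(1-2\varepsilon)h$ for $h$ small enough. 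Choosing $\varepsilon<\sigma/2$ at the outset produces the contradiction $(1-\sigma)h\geq|I^-\cap E|>(1-\sigma)h$.

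The obstacle in (i) is negligible once one observes that pores are open sets, so adding the (measure-zero) topological boundary of $E$ changes nothing. The slightly subtler point in (ii) is the choice of test interval: centring a symmetric interval at a density point $x_0\in E$ forces its left split $I^-$ to inherit, in the limit, the same full density of $E$, which directly collides with the uniform lower bound $\sigma|I^-|$ on the hole-mass coming from the right-sided weakly porous condition. No further ingredients beyond Definition~\ref{def3.3} and the Lebesgue density theorem are needed.
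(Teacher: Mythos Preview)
Your argument for part (i) is exactly the paper's: the single topological observation that an open interval misses $E$ iff it misses $\bar E$ simultaneously fixes $\rho_E=\rho_{\bar E}$ and transfers the pore families of Definition~\ref{def3.3} unchanged.

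For part (ii) you take a different route from the paper. The paper first reduces to closed (hence measurable) $E$ via part (i), then derives the uniform bound $|E\cap I|\le(1-\sigma)|I|$ for every interval $I$ and concludes $|E|=0$ by a direct outer-measure covering argument. You instead localize at a Lebesgue density point of $E$ and read off a contradiction from the single interval $I=(x_0-h,x_0+h)$. Both are valid and in fact closely related: your density argument is the standard shortcut that replaces the paper's explicit covering, while the paper's version avoids quoting the density theorem at the cost of a few extra lines. One small point you should make explicit: the Lebesgue density theorem, as usually stated, requires $E$ to be measurable, which is not assumed. The cleanest fix is the paper's own move---invoke part (i) to pass to $\bar E$ before starting (ii); alternatively one can work with outer measure and the outer density theorem, but the reduction to $\bar E$ is simpler and already at hand.
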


\begin{proof}
    Item \textit{i.} follows from the fact that a given open interval $J$ satisfies $J\cap E=\emptyset\iff J\cap\bar{E}=\emptyset$. By \textit{i.} and because of the completeness of the Lebesgue measure, to prove \textit{ii.} we can assume that $E$ is a closed set. Take $I$ to be any open interval and set $J$ as the open interval such that $J^-=I$. Using the right-sided weakly porous condition of $E$ applied to $J$, we can find a sequence of pairwise disjoint intervals $I_1,\ldots,I_N$ contained in $I$ such that $\sigma|I|\leq\sum_{i=1}^N|I_i|\leq|I\setminus E|$. Since $E$ is Lebesgue measurable, we have that $|I|=|I\cap E|+|I\setminus E|$, so from the previous inequalities we may write
    $$|I\cap E|\leq \Big(\frac{1}{\sigma}-1\Big)|I\setminus E|=\Big(\frac{1}{\sigma}-1\Big)(|I|-|I\cap E|).$$
    Setting $\lambda:=\sigma^{-1}-1$ and solving for $|I\cap E|$ we get $|I\cap E|\leq \frac{\lambda}{\lambda+1}|I|$. This inequality holds for every open interval $I$, so assuming without loss of generality that $|E|<\infty$ we can estimate its Lebesgue measure in the following manner. Take $\{I_j\}_{j=1}^\infty$ to be a collection of open intervals such that $E\subset \bigcup_{j=1}^\infty I_j$. Then
    \begin{align*}
        |E| = \Big| E\cap \bigcup_{j=1}^\infty I_j \Big| = \Big|  \bigcup_{j=1}^\infty (E\cap I_j) \Big| \leq \sum_{j=1}^\infty |E\cap I_j| \leq \frac{\lambda}{\lambda+1} \sum_{j=1}^\infty|I_j|,
    \end{align*}
    which implies $|E|\leq\frac{\lambda}{\lambda+1}|E|$ by taking the infimum over all such sequences $\{I_j\}_{j=1}^\infty$ and, since $\frac{\lambda}{\lambda+1}<1$, it follows $|E|=0$.
\end{proof}

Just as the usual and one-sided Muckenhoupt classes are related as stated in Remark~\ref{rem1}, we now aim to verify that a set $E$ is weakly porous if and only if it is one-sided weakly porous, in both possible directions. Before that, we must borrow the following result concerning the doubling-like condition satisfied by the maximal hole function associated to any weakly porous set.

\begin{lemma}[\cite{ANDERSON}, Lemma 3.2 (ii)]
    \label{lemma3.0}
    Let $E$ be a $(\sigma,\gamma)$-w.p. set. If $J\subset I$ are two open intervals satisfying $|I|=2|J|$, then there exists some constant $\Phi=\Phi(\sigma,\gamma)>1$ independent from both $I$ and $J$ such that
$$\rho(I)\leq \Phi\rho(J).$$
\end{lemma}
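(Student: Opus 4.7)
The plan is to apply the $(\sigma,\gamma)$-weak porosity of $E$ directly to $I$ and extract an upper bound on $\rho(I)$ in terms of $\rho(J)$. First, using weak porosity on $I$, I obtain pairwise disjoint open intervals $I_1,\ldots,I_N \subset I\setminus E$ with $|I_i|\geq 2\gamma\rho(I)$ and $\sum_{i=1}^N|I_i|\geq\sigma|I|$. The key observation is that each intersection $I_i\cap J$ is an open sub-interval of $J\setminus E$, so placing a ball of radius $|I_i\cap J|/2$ at its midpoint (which lies in $J$) gives $\rho(J)\geq |I_i\cap J|/2$, that is, $|I_i\cap J|\leq 2\rho(J)$.

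Three ingredients now combine: the intersection bound $\sum_i|I_i\cap J|\leq 2N\rho(J)$; the disjointness bound $\sum_i|I_i\setminus J|\leq |I\setminus J|=|J|$; and the count $N\leq |I|/(2\gamma\rho(I))$ coming from the lower bound on $|I_i|$. Plugging these into $\sum_i|I_i|=\sum_i|I_i\cap J|+\sum_i|I_i\setminus J|$ and using $\sum_i|I_i|\geq\sigma|I|$, I obtain
\begin{equation*}
\sigma|I|\;\leq\;2N\rho(J)+|J|\;\leq\;\frac{|I|\,\rho(J)}{\gamma\,\rho(I)}+\frac{|I|}{2}.
\end{equation*}
Dividing by $|I|$ and rearranging yields $(\sigma-\tfrac12)\gamma\rho(I)\leq \rho(J)$. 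When $\sigma>1/2$, this is exactly the conclusion with the explicit constant $\Phi=[\gamma(\sigma-\tfrac12)]^{-1}$.

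The main obstacle is the case $\sigma\leq 1/2$, in which the final estimate becomes vacuous. I would handle it by iteratively refining the pore family: apply the weak porosity condition to each connected component (\emph{gap}) of $I\setminus\bigcup_i I_i$, thereby enlarging the covered proportion from $s$ to $1-(1-s)(1-\sigma)$ at each round. After $k=\lceil\log 2/\log(1/(1-\sigma))\rceil$ iterations the pore family covers more than $|I|/2$ of $I$, and the previous argument applies. The delicate point is that deeper-level pores have length only $2\gamma\rho(C)$ for various gaps $C$, so the minimum pore size can degrade; one must carefully track this degradation (or replace the crude count $N\leq |I|/(2\gamma\rho(I))$ by a finer multi-scale estimate) in order to absorb the loss into a single constant $\Phi=\Phi(\sigma,\gamma)$ independent of $I$ and $J$.
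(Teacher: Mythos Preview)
The paper does not supply its own proof of this lemma; it is quoted verbatim from \cite{ANDERSON} and used as a black box. So there is no in-paper argument to compare against, and your proposal must stand on its own.

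Your computation for $\sigma>\tfrac12$ is clean and correct: the three ingredients (the bound $|I_i\cap J|\le 2\rho(J)$, the trivial estimate $\sum_i|I_i\setminus J|\le|I\setminus J|=|I|/2$, and the count $N\le|I|/(2\gamma\rho(I))$) combine exactly as you say to give $\rho(I)\le[\gamma(\sigma-\tfrac12)]^{-1}\rho(J)$.

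The gap is real, however, and your sketch for $\sigma\le\tfrac12$ does not close it. The iteration you describe does boost the covered proportion from $\sigma$ to $1-(1-\sigma)^k$, but the obstacle you flag is fatal in the form you state it: when you apply weak porosity to a gap $C$, the new pores have length $\ge 2\gamma\rho(C)$, and there is \emph{no a priori lower bound} on $\rho(C)/\rho(I)$. Indeed, bounding $\rho(C)$ from below by a constant times $\rho(I)$ is essentially the content of the lemma you are trying to prove (with $C$ playing the role of $J$ after rescaling), so the argument becomes circular. Consequently the total pore count $N$ after $k$ rounds is not controlled by $|I|/\rho(I)$, and the inequality $\sum_i|I_i\cap J|\le 2N\rho(J)$ loses all force. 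A ``finer multi-scale estimate'' would have to avoid counting pores altogether, which is a different argument, not a refinement of this one.

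One way to repair the small-$\sigma$ case without circularity is to abandon the global count and instead chain through a bounded number of overlapping half-length subintervals connecting the maximal hole of $I$ to $J$, applying weak porosity once at each step to transfer a large pore across the boundary; this is closer in spirit to the paper's own Lemma~\ref{lemma3.1}. Alternatively, consult \cite[Lemma~3.2]{ANDERSON} directly, since that is where the full proof lives.
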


\begin{theorem}
    \label{prop3.2}
    Given $E\in\mathcal{P}(\mathbb{R})\setminus\{\emptyset\}$, the following statements are equivalent.
    \begin{enumerate}[a.]
        \item There are constants $\sigma,\gamma\in(0,1)$ such that $E$ is $(\sigma,\gamma)$-w.p.
        \item There are constants $\sigma_0,\gamma_0\in(0,1)$ such that $E$ is $(\sigma_0,\gamma_0,-)$-w.p. and $(\sigma_0,\gamma_0,+)$-w.p. simultaneously.
    \end{enumerate}
\end{theorem}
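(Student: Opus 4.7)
The plan is to prove the two implications directly from the definitions. In both directions the containment condition $i$ and the total-length bound $iii$ transfer essentially for free; the real work lies in the size condition $|I_i|\geq 2\gamma\rho(\cdot)$, which forces us to compare the three maximal-hole radii $\rho(I)$, $\rho(I^-)$, and $\rho(I^+)$. For $a\implies b$ we will invoke Lemma~\ref{lemma3.0}; for $b\implies a$ we cannot, since that lemma presupposes the two-sided property, and a direct geometric comparison will be required instead.

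For $a\implies b$, fix an open interval $I=(a,b)$ with midpoint $m$ and apply the weak porosity of $E$ to the open subinterval $I^-=(a,m)$. This yields disjoint open intervals $I_1,\ldots,I_N\subset I^-\setminus E$ with $\sum_i|I_i|\geq\sigma|I^-|$ and each $|I_i|\geq 2\gamma\rho(I^-)$. Conditions $i$ and $iii$ of Definition~\ref{def3.3} hold verbatim, and for condition $ii$ we combine the trivial bound $\rho(I^+)\leq\rho(I)$ with Lemma~\ref{lemma3.0} applied to the pair $I^-\subset I$ (which satisfies $|I|=2|I^-|$), obtaining $\rho(I)\leq\Phi\rho(I^-)$ and therefore $|I_i|\geq(2\gamma/\Phi)\rho(I^+)$. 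Hence $E$ is $(\sigma,\gamma/\Phi,+)$-w.p., and the $(\sigma,\gamma/\Phi,-)$-w.p. assertion follows by running the same argument on the open interval $(m,b)$.

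For $b\implies a$, fix an open interval $I$ and apply the right-sided condition at $I$ to obtain disjoint intervals in $I^-\setminus E$ of length at least $2\gamma_0\rho(I^+)$ summing to at least $\sigma_0|I^-|$, and the left-sided condition at $I$ to obtain disjoint intervals in $I^+\setminus E$ of length at least $2\gamma_0\rho(I^-)$ summing to at least $\sigma_0|I^+|$. Their union is a disjoint family in $I\setminus E$ of total length at least $\sigma_0|I|$, handling conditions $i$ and $iii$ of Definition~\ref{def3.2}. The remark after Definition~\ref{def3.3} gives $\rho(I^-)\geq\gamma_0\rho(I^+)$ and $\rho(I^+)\geq\gamma_0\rho(I^-)$, so $\min(\rho(I^-),\rho(I^+))\geq\gamma_0\max(\rho(I^-),\rho(I^+))$.

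It remains to bound $\rho(I)$ by $\min(\rho(I^-),\rho(I^+))$ up to a constant, which is the main obstacle. The key observation is that any admissible pore $(y-s,y+s)\subset I\setminus E$ either lies entirely in $I^-$, entirely in $I^+$, or straddles the midpoint $m$; in the last case the two sub-intervals $(y-s,m)$ and $(m,y+s)$ are themselves admissible pores for $I^-$ and $I^+$, with half-lengths summing to $s$, forcing $\max(\rho(I^-),\rho(I^+))\geq s/2$. Passing to the supremum in $s$ yields $\rho(I)\leq 2\max(\rho(I^-),\rho(I^+))\leq(2/\gamma_0)\min(\rho(I^-),\rho(I^+))$, so every chosen interval has length at least $2\gamma_0\cdot(\gamma_0/2)\rho(I)=\gamma_0^2\rho(I)$, establishing $(\sigma_0,\gamma_0^2/2)$-weak porosity. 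Without Lemma~\ref{lemma3.0} available as a black box, this elementary midpoint-straddling step is the one place where we really have to do work.
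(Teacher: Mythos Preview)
Your proof is correct and follows essentially the same route as the paper. The only difference is in $b\implies a$: you take the union of both one-sided families to cover all of $I$ and then use $\min(\rho(I^-),\rho(I^+))\geq\gamma_0\max(\rho(I^-),\rho(I^+))$, obtaining constants $(\sigma_0,\gamma_0^2/2)$, whereas the paper simply picks whichever half has the larger maximal hole and uses only that one one-sided family, obtaining $(\sigma_0/2,\gamma_0/2)$; both arguments rest on the same key inequality $\rho(I)\leq 2\max(\rho(I^-),\rho(I^+))$, which the paper records as $\rho(I)\leq\rho(I^-)+\rho(I^+)$.
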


\begin{proof}
    Assume first that $E$ is $(\sigma,\gamma)$-w.p. for some $0<\sigma,\gamma<1$. Note that in order to prove  $a.$ implies $b.$, it suffices to check the right side porosity condition on $E$. To this end, we pick an open interval $I$ and use the hypothesis applied to $I^-$ to find pairwise disjoint intervals $I_1,\ldots,I_N$ such that $I_i\subset I^-\setminus E$, $|I_i|\geq2\gamma\rho(I^-)$ and $\sum_{i=1}^N |I_i|\geq \sigma|I^-|$. This already accounts for properties \textit{i} and \textit{iii} of Definition~\ref{def3.3} if we set $\sigma_0:=\sigma$, while property \textit{ii} with $\gamma_0:=\gamma\Phi^{-1}$ follows from Lemma~\ref{lemma3.0} since
    \begin{equation*}
    	|I_i|\geq2\gamma\rho(I^-)\geq2\gamma\Phi^{-1}\rho(I)\geq2\gamma\Phi^{-1}\rho(I^+).
    \end{equation*}
    On the other hand, if $E$ is both $(\sigma_0,\gamma_0,-)$-w.p. and $(\sigma_0,\gamma_0,+)$-w.p., notice that $\rho(I)\leq\rho(I^-)+\rho(I^+)\leq 2\max\{\rho(I^-),\rho(I^+)\}$ for any open interval $I$. Then, for $I$ fixed we can assume without loss of generality that $\rho(I^+)\geq\rho(I^-)$ and, therefore, $\rho(I^+)\geq\frac{1}{2}\rho(I)$. The condition $E$ is $(\sigma_0,\gamma_0,+)$-w.p. applied to $I$ then assures the existence of pairwise disjoint intervals $I_1,\ldots,I_N$ such that $I_i\subset I^-\setminus E$, $|I_i|\geq2\gamma_0\rho(I^+)\geq2\gamma\rho(I)$ and $\sum_{i=1}^N |I_i|\geq \sigma_0|I^-|=\sigma|I|$ if we happen to choose $\gamma=\frac{1}{2}\gamma_0$ and $\sigma=\frac{1}{2}\sigma_0$.
\end{proof}

\begin{proposition}
    \label{prop3.3}
    Given a non-empty set $E\subset \mathbb{R}$ and writing $R:\mathbb{R}\to\mathbb{R}$ for the reflection through the origin, i.e. $R(x)=-x$, we have
    \begin{enumerate}[(a)]
        \item if $E$ is $(\sigma,\gamma)$-w.p. then $E\cap[x_0,\infty)$ is $(\sigma_0,\gamma_0,+)$-w.p. for every $x_0\in\mathbb{R}$;

        \item if $E$ is $(\sigma,\gamma,\pm)$-w.p. then $R(E)$ is $(\sigma,\gamma,\mp)$-w.p.
    \end{enumerate}
\end{proposition}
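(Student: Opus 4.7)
The plan is to handle (b) by a direct symmetry argument and (a) by a short case split that leverages the equivalence in Theorem~\ref{prop3.2} between weak porosity and simultaneous two-sided weak porosity.

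For (b), the reflection $R$ is a length-preserving bijection on $\mathbb{R}$ that sends open intervals to open intervals, exchanges left and right halves of every interval (so $R(J^-)$ coincides, up to an endpoint, with $R(J)^+$ and vice versa), and satisfies the identity $\rho_{R(E)}(J) = \rho_E(R(J))$ for every open interval $J$, since $(y-s, y+s) \subset J \setminus R(E)$ if and only if $(-y-s, -y+s) \subset R(J) \setminus E$. Given these properties, if $E$ is $(\sigma, \gamma, +)$-w.p., then to verify the $(\sigma, \gamma, -)$-w.p. of $R(E)$ on an arbitrary open interval $I$ I would apply the hypothesis to $\tilde{I} := R(I)$, obtain witnesses $\tilde{I}_1, \ldots, \tilde{I}_N \subset \tilde{I}^- \setminus E$ with $|\tilde{I}_i| \geq 2\gamma \rho_E(\tilde{I}^+)$ and $\sum|\tilde{I}_i| \geq \sigma|\tilde{I}^-|$, and then set $I_i := R(\tilde{I}_i)$. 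These $I_i$ are pairwise disjoint open intervals contained in $R(\tilde{I}^-) \setminus R(E) = I^+ \setminus R(E)$, and the two identities above immediately yield $|I_i| \geq 2\gamma \rho_{R(E)}(I^-)$ and $\sum |I_i| \geq \sigma |I^+|$. The converse implication is the same argument with $E$ replaced by $R(E)$, using $R \circ R = \mathrm{id}$.

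For (a), set $E_0 := E \cap [x_0, \infty)$ and, by Theorem~\ref{prop3.2}, fix $\sigma', \gamma' \in (0,1)$ for which $E$ is $(\sigma', \gamma', +)$-w.p. Given an arbitrary open interval $I = (a,b)$ with midpoint $c = (a+b)/2$, I would split based on the position of $x_0$. If $x_0 \geq c$, then $I^- \subset (-\infty, c) \subset (-\infty, x_0]$ is disjoint from $E_0$, so $I^-$ itself serves as a single pore of length $|I^-|$; since $\rho_{E_0}(I^+) \leq |I^+|/2 = |I^-|/2$, the conditions of Definition~\ref{def3.3} are then trivially satisfied for any $\sigma_0, \gamma_0 \leq 1$. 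If instead $x_0 < c$, then $I^+ \subset (x_0, \infty)$, so $E_0 \cap I^+ = E \cap I^+$ and hence $\rho_{E_0}(I^+) = \rho_E(I^+)$; applying the right-sided weak porosity of $E$ to $I$ yields pairwise disjoint pores $J_i \subset I^- \setminus E \subset I^- \setminus E_0$ with $|J_i| \geq 2\gamma' \rho_E(I^+) = 2\gamma' \rho_{E_0}(I^+)$ and $\sum|J_i| \geq \sigma'|I^-|$, so taking $\sigma_0 := \sigma'$ and $\gamma_0 := \gamma'$ closes the case. The only point that requires care is the identity $\rho_{E_0}(I^+) = \rho_E(I^+)$ in the second case; once this is secured, the containment $E_0 \subset E$ makes every $E$-witness an $E_0$-witness automatically, so no constants are lost in the passage.
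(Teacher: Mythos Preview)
Your proposal is correct and follows essentially the same strategy as the paper: for (b) both arguments are the straightforward reflection symmetry, and for (a) both invoke Theorem~\ref{prop3.2} to pass to the right-sided weak porosity of $E$ and then run a case analysis on the position of $x_0$ relative to $I$. Your two-case split (on whether $x_0\ge c$ or $x_0<c$) is in fact a slight streamlining of the paper's four cases, and you make explicit the key point $\rho_{E_0}(I^+)=\rho_E(I^+)$ that the paper uses implicitly when the center of $I$ lies to the right of $x_0$.
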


\begin{proof} To prove $(a)$, consider the set $E_+:=E\cap[x_0,\infty)$ and take $I$ to be an open interval. If $I\subset(-\infty,x_0)$, clearly $E_+$ meets the right-sided w.p. condition on $I$ for any pair of numbers $\sigma,\gamma\in(0,1)$. If instead $I\subset(x_0,\infty)$, then $E_+$ satisfy the w.p. condition on $I$ with parameters $\sigma$ and $\gamma$, and therefore also satisfies the right-sided w.p. condition on $I$ with parameters $\sigma_0:=\sigma$ and $\gamma_0:=\gamma\Phi^{-1}$ as in the proof of Theorem~\ref{prop3.2}. Finally, consider the case where $x_0\in I$ and let $z$ denote the center of this interval. If $z\leq x_0$, then $I^-\cap E_+=\emptyset$ and since $|I^-|\geq2\gamma_0\rho(I^+)$, then the collection $\{I^-\}$ satisfies properties \textit{i}-\textit{iii} of Definition~\ref{def3.3} replacing $E$ with $E_+$. If $z>x_0$, recalling that $E$ is ($\sigma_0,\gamma_0$,+)-w.p. by Theorem \ref{prop3.2}, we can find a collection of pairwise disjoint open intervals $I_1,\ldots,I_N$ satisfying \textit{i}-\textit{iii} of Definition~\ref{def3.3}, but since $E_+\subset E$, this collection also satisfies the mentioned definition for $E_+$ in place of $E$. The proof of $(b)$ becomes straightforward after noticing that $\rho_{R(E)}(I^+)=\rho_E(R(I)^-)$ and that an open interval J satisfies $J\subset I^-$ if and only if $R(J)\subset R(I)^+$.
\end{proof}

\begin{example}
    \label{example3.1}
    It is not difficult to see that the set $\mathbb{Z}$ is $(\frac{1}{2},\frac{1}{2})$-weakly porous, however, its subset $\mathbb{N}_0$ is not weakly porous as the intervals $I_n=(-2^n,2^n)$ and $J_n=(0,2^n)$ satisfies the hypotheses of Lemma~\ref{lemma3.0} for each $n$ but $2^{n-1}=\rho(I_n)\leq\Phi\rho(J_n)=\Phi$ cannot be true for any chosen constant $\Phi$. This example was actually used in \cite{ANDERSON} to show that the weak porosity property is not preserved by set inclusion. Despite this, $\mathbb{N}_0$ is right-sided weakly porous by a direct application of Proposition~\ref{prop3.3}(a).
\end{example}

As in the example above, every cut-off of a weakly porous set above some point $x_0$ is right-sided weakly porous, but not every right-sided weakly porous set that is not left-sided weakly porous can be obtained in this way. 

\begin{example}
    \label{example3.2}
    Take $E$ to be the set $\{z_k\}_{k\in\mathbb{Z}}$ given by
    $$z_k:=\begin{cases} 
      -2^{-k}, &k<0 \\
      k, &k\geq 0. 
   \end{cases}$$
    Then, picking $I_n=(-2^n,2^n)$ and $J_n=(0,2^n)$ as in Example~\ref{example3.1}, we see that there can be no constant $\Phi$ such that $2^{n-2}=\rho(I_n)\leq\Phi\rho(J_n)=\Phi$. Thus, $E$ is not weakly porous. However, it can be shown that $E$ does satisfy the right-sided weakly porous condition with parameters $\sigma=\gamma=\frac{1}{2}$. 
\end{example}

The next result is an analogous of Lemma~\ref{lemma3.0} for the right-sided case and, together with Corollary~\ref{coro3.1}, aims to show how the maximal hole size on a given interval gives information about the pore radii in regions located to the left of that same interval.

\begin{lemma}
    \label{lemma3.1}
    Let $E\subset\mathbb{R}$ be $(\sigma,\gamma,+)$-w.p. and let $I$ be any open interval. Then, \begin{equation}
        \label{eq3.1}
        \rho(I)\leq\frac{\gamma+1}{\gamma}\rho(I^{-}).
    \end{equation}
\end{lemma}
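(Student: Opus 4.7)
The plan is, given $\epsilon>0$, to pick $s\in\Lambda(I)$ with $s>\rho(I)-\epsilon$ together with a witness $y\in I$ satisfying $(y-s,y+s)\subset I\setminus E$, then to establish $\rho(I^-)\geq\frac{\gamma}{\gamma+1}\,s$ and let $\epsilon\to 0^+$. Beyond an elementary decomposition of the pore, the only non-trivial input is the remark immediately following Definition~\ref{def3.3}, namely that $\rho(I^-)\geq\gamma\,\rho(I^+)$ for every open interval $I$.

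Writing $m=(a+b)/2$ for the midpoint of $I=(a,b)$, I would split the witness pore into its two parts lying on either side of $m$. Denoting
\[
L:=\max\{0,\,\min(m,y+s)-(y-s)\},\qquad R:=\max\{0,\,(y+s)-\max(m,y-s)\},
\]
a short case analysis (pore lying entirely in $I^-$, straddling $m$, or lying entirely in $I^+$) shows $L+R=2s$. Re-centring the open sub-interval of length $L$ contained in $I^-\setminus E$ produces a witness for $\rho(I^-)\geq L/2$, and symmetrically $\rho(I^+)\geq R/2$. Combining with the right-sided weak porosity inequality yields
\[
\rho(I^-)\geq\max\!\left(\tfrac{L}{2},\,\gamma\,\rho(I^+)\right)\geq\max\!\left(\tfrac{L}{2},\,\tfrac{\gamma R}{2}\right).
\]

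To finish, I would invoke the elementary bound $\max(L/2,\gamma R/2)\geq\gamma(L+R)/(2(\gamma+1))$ valid for all $L,R\geq 0$, which follows by splitting on whether $L\geq\gamma R$ or $L<\gamma R$ and checking that the corresponding term dominates. Substituting $L+R=2s$ gives $\rho(I^-)\geq\gamma s/(\gamma+1)$, and letting $s\nearrow\rho(I)$ completes the proof. The one place requiring attention is the ``seam'' at $m$: since $I^\pm$ are half-open, one must verify that each re-centred truncation is genuinely of the form $(y'-s',y'+s')\subset I^\pm\setminus E$ with centre $y'\in I^\pm$. This bookkeeping is routine, and once it is done the rest reduces to the one-line algebraic manipulation above.
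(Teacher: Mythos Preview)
Your proposal is correct and follows essentially the same approach as the paper's proof: split a (near-)maximal $E$-free hole in $I$ at the midpoint, use $\rho(I^-)\ge\gamma\,\rho(I^+)$ from the right-sided weak porosity condition, and then optimize the resulting $\max$ to extract the factor $\gamma/(\gamma+1)$. The only cosmetic differences are that the paper works directly with a maximal hole $(c,d)$ of length $2\rho(I)$ (relying on the earlier remark that the supremum is attained) and treats the three cases separately, whereas you use an $\epsilon$-approximation and unify the cases via the $L,R$ notation.
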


\begin{proof}
     Take $(c,d)$ to be an $E$-free interval contained in $I$ with $d-c=2\rho(I)$, whose existence is assured by the definition of $\rho(I)$. If $(c,d)\subset I^{-}$, then $\rho(I^{-})=\rho(I)$; whereas if $(c,d)\subset I^{+}$, then $\rho(I^{-})\geq\gamma\rho(I^{+})=\frac{1}{2}\gamma(d-c)=\gamma\rho(I)$. So \eqref{eq3.1} holds in both scenarios. Let $z_I$ denote the center of $I$ and consider now the case where $c<z_I<d$. Note that $(c,z_I)\subset I^{-}$ and $(z_I,d)\subset I^{+}$. Thus
    \begin{equation}
        \label{eq3.2}
        \rho(I^{-}) \geq \frac{1}{2}(z_I-c)
    \end{equation}
    and
    \begin{equation}
        \label{eq3.3}
        \rho(I^{-}) \geq \gamma \rho(I^{+}) \geq \frac{1}{2}\gamma (d-z_I).
    \end{equation}
    Combining \eqref{eq3.2} and \eqref{eq3.3} we find that $\rho(I^{-})\geq f(z_I)$, where $f(z):=\max\Big\{\frac{(z-c)}{(d-c)},\gamma\frac{(d-z)}{(d-c)}\Big\}\rho(I)$. As the first argument in the maximum inside $f$ is an increasing function of $z$ and the second one is decreasing, the minimum of $f$ for $z\in(c,d)$ is reached when both entries become equal. This is, when
    $$\frac{(z_\text{min}-c)}{(d-c)}=\gamma\frac{(d-z_\text{min})}{(d-c)}\iff z_\text{min}=\frac{\gamma d+c}{(1+\gamma)},$$
    from what we finally get $\rho(I^{-})\geq f(z_\text{min})=\frac{\gamma}{1+\gamma}\rho(I)$.
\end{proof}

\begin{corollary}
    \label{coro3.1}
    Let $E\subset\mathbb{R}$ be $(\sigma,\gamma,+)$-w.p. and let $I$ be an open interval with center $z_I$. If $J$ is another open interval contained in $I$ and which center $z_J$ satisfies $z_J\leq z_I$, then there exists constants $\theta_1,\theta_2>0$ dependent of $\gamma$ such that
        \begin{equation}
            \label{eq3.4}
            \rho(I^+)\leq \theta_1\Big(\frac{|I|}{|J|}\Big)^{\theta_2}\rho(J^+).
        \end{equation}
\end{corollary}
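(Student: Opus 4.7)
The plan is to derive the inequality by iterating Lemma~\ref{lemma3.1} approximately $\log_2(|I|/|J|)$ times along a chain of nested open intervals shrinking from $I$ down to $J$, with each iteration costing a multiplicative factor depending only on $\gamma$; the accumulation of these factors is precisely what produces the power-law bound $(|I|/|J|)^{\theta_2}$.

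The key tool I would first establish is an auxiliary halving estimate: if $K\subset L$ are open intervals with $|L|\leq 2|K|$ and $z_K\leq z_L$, then $\rho(L^+)\leq\Phi\,\rho(K^+)$ for some constant $\Phi=\Phi(\gamma)$. To prove this, I take a maximal pore $(c,d)\subset L^+$ with $d-c=2\rho(L^+)$ and consider the truncation $L'=(a_L,d)$, where $a_L$ is the left endpoint of $L$. One checks that $(c,d)\subset(L')^+$ and $(L')^-\subset L^-$, so Lemma~\ref{lemma3.1} applied to $L'$ yields an $E$-free subinterval of $L^-$ whose length is at least $2\gamma\rho(L^+)/(\gamma+1)$. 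The hypothesis $z_K\leq z_L$ then guarantees that $K^+$ reaches far enough to the left that this pore can be transported into $K^+$ through one more application of the right-sided weakly porous condition to a suitably chosen interval whose left half contains the newly-found pore and whose right half lies inside $K^+$, at the cost of another $\gamma$-dependent constant.

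With the halving estimate in hand, I set $n=\lfloor\log_2(|I|/|J|)\rfloor$ and build a chain of open intervals $I=I_0\supset I_1\supset\cdots\supset I_n\supset J$ with $|I_{k+1}|=|I_k|/2$ and $z_{I_{k+1}}\leq z_{I_k}$ throughout; this is a routine finite interpolation made possible by the hypothesis $z_J\leq z_I$. Applying the halving estimate $n+1$ times, once to each consecutive pair $I_k\supset I_{k+1}$ and once more with $L=I_n$, $K=J$, gives
\[
\rho(I^+)\leq\Phi^{n+1}\rho(J^+)\leq\Phi\cdot\Big(\tfrac{|I|}{|J|}\Big)^{\log_2\Phi}\rho(J^+),
\]
which is the desired inequality with $\theta_1=\Phi$ and $\theta_2=\log_2\Phi$.

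The main obstacle is the transport step in the auxiliary halving estimate: moving the pore from $L^-$ into $K^+$ requires a short case analysis based on the position of that pore relative to the midpoints $z_L,z_K$ and to the endpoints of $K^+$, in order to identify the correct auxiliary interval to which Definition~\ref{def3.3} can be applied productively. Once this transport is accomplished cleanly, the iteration and the final bookkeeping are straightforward.
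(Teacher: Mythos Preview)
Your iteration strategy is right in spirit, and the halving estimate you want is in fact true; but your proposed proof of it has a real gap in the ``transport step.'' After you produce an $E$-free interval $P\subset (L')^-\subset L^-$ via Lemma~\ref{lemma3.1}, you propose to apply Definition~\ref{def3.3} to an interval $M$ with $P\subset M^-$ and $M^+\subset K^+$. But Definition~\ref{def3.3} (and Lemma~\ref{lemma3.1}) only produces large pores in $M^-$ from knowledge of $\rho(M^+)$, not the other way around: it transports pores \emph{leftward}, never rightward. Since $K^+=[z_K,b_K]$ and $P\subset L^-=(a_L,z_L)$, the pore $P$ can lie entirely in $(a_L,z_K)$ (nothing prevents $z_K=z_L$, for instance), in which case $P$ sits strictly to the left of $K^+$ and no choice of $M$ lets you conclude anything about $\rho(K^+)$. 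The case analysis you allude to cannot close this gap because the one-sided hypothesis is genuinely asymmetric.

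The paper sidesteps this difficulty by growing \emph{outward} from $J$ instead of shrinking inward from $I$: take $J_n$ centred at $z_J$ with $|J_n|=2^n|J|$. Then $J_n^+=(J_{n+1}^+)^-$ \emph{exactly}, so Lemma~\ref{lemma3.1} applied to $J_{n+1}^+$ gives $\rho(J_{n+1}^+)\le\frac{\gamma+1}{\gamma}\rho(J_n^+)$ directly, with no transport needed. The hypothesis $z_J\le z_I$ is used only once, to guarantee $I^+\subset J_m^+$ for $m$ large enough that $2^{m-1}|J|\ge|I|$; then $\rho(I^+)\le\rho(J_m^+)\le\big(\tfrac{\gamma+1}{\gamma}\big)^m\rho(J^+)$ and the bookkeeping is immediate. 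If you want to salvage your halving estimate, note that this same idea proves it: with $N'$ the interval of length $2|K|$ and left endpoint $z_K$, one checks $L^+\subset N'$ (using $|L|\le 2|K|$ and $z_K\le z_L$), and two applications of Lemma~\ref{lemma3.1} give $\rho(L^+)\le\rho(N')\le\big(\tfrac{\gamma+1}{\gamma}\big)^2\rho(K^+)$.
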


\begin{proof}
    Let $m$ be the least positive integer such that $2^{m-1}|J|\geq|I|$ (iff $m-1<1+\log_2(\frac{|I|}{|J|})\leq m$). For each $0\leq n\leq m$, take $J_n$ to be the open interval with center $z_J$ and such that $|J_n|=2^n|J|$. In particular, $J_0=J$, $I\subset J_m$ and, since $z_J\leq z_I$, $I^+\subset J^+_m$. Clearly, this last inclusion implies $\rho(I^+)\leq\rho(J_m^+)$. Furthermore, because $\rho(J_n^+)=\rho(J_{n+1}^{+-})$, we can repeatedly apply Lemma~\ref{lemma3.1} to find
    \begin{align*}
        \rho(J_m^+) \leq \frac{\gamma+1}{\gamma} \rho(J_{m-1}^+) \leq \ldots \leq \Big(\frac{\gamma+1}{\gamma}\Big)^{m} \rho(J_0^+) = \Big(\frac{\gamma+1}{\gamma}\Big)^{m} \rho(J^+).
    \end{align*}
    Thus, as $\frac{\gamma+1}{\gamma}>1$ and $m<2+\log_2(\frac{|I|}{|J|})$, we have
    \begin{align*}
        \rho(I^+) \leq \rho(J_m^+) \leq \Big(\frac{\gamma+1}{\gamma}\Big)^{m} \rho(J^+) &\leq \Big(\frac{\gamma+1}{\gamma}\Big)^{2+\log_2(\frac{|I|}{|J|})} \rho(J^+) \\
        &= \Big(\frac{\gamma+1}{\gamma}\Big)^2\Big(\frac{|I|}{|J|}\Big)^{\log_2(\frac{\gamma+1}{\gamma})} \rho(J^+),
    \end{align*}
    so it suffices to take $\theta_1:=(\frac{\gamma+1}{\gamma})^2$ and $\theta_2:=\log_2(\frac{\gamma+1}{\gamma})$.
\end{proof}    

The hypothesis $z_J\leq z_I$ in Corollary~\ref{coro3.1} is essential, as \eqref{eq3.4} does not generally hold without this restriction. Indeed, $\mathbb{N}_0$ is right-sided weakly porous as stated in Example~\ref{example3.1}, but if we take $I_n=(-2n(1+t),2n(1-t))$ for some fixed $0<t<\frac{1}{2}$ and $J_n=(-n,n)$, then \eqref{eq3.4} applied to $I=I_n$ and $J=J_n$ should read
$$\max\{1,2nt\}=\rho(I_n^+)\leq\theta_12^{-\theta_2},$$
which is a contradiction since $\rho(I^+_n)$ tends to infinity as $n\to\infty$ independently from the choice of $t$ and despite having $J_n\subset I_n$ (but $z_J>z_I$). So, a region located right to a relatively big $E$-free hole does not necessarily present similar-sized pores. This should be compared with the case of weights $w\in A_1^+(\mathbb{R})$ in the following sense. The mean value of $w$ on a region located left to an interval $I$ such that $\text{ess inf}_I\:w$ is relatively small must be low enough for \eqref{eq2.0} to hold, whereas that cannot be assured for the mean value of $w$ on regions located right to that same interval $I$. The strong connection between one-sided weakly porous sets and $A_1^+$ weights will be addressed in the next pages.

\section{Relation between one-sided porous sets $E$ and the condition $d(\cdot,E)^{-\alpha}\in A_1^+(\mathbb{R})\cap L^1_\text{loc}(\mathbb{R})$}
\label{section4}

The main goal of this section relies on proving Theorem~\ref{theorem4.1}, which characterizes the collection of sets $E$ for which $d(\cdot,E)^{-\alpha}\in A_1^+(\mathbb{R})\cap L^1_\text{loc}(\mathbb{R})$ for some $\alpha>0$ as the family of all right-sided weakly porous sets, where $d(x,E):=\inf_{y\in E}d(x,y)$. Two technical lemmas will be needed beforehand to reach this objective. In particular, Lemma~\ref{lemma4.2} is very interesting in itself as it reveals an exponential decay in the pore sizes distribution of sets satisfying Definition~\ref{def3.3}; which will be crucial in the proof of the main result.

\begin{proposition}
    \label{lemma4.0}
    Let $E$ be a closed non-empty subset of $\mathbb{R}$ with zero Lebesgue measure. Then, for every open interval $I$ 
we have that
    \begin{equation}
        \label{eq4.0}
       d(x,E)\leq 2\left(1+\frac{d(I,E)}{|I|}\right)\rho(I),\:\textrm{for every } x\in I,
    \end{equation}
    where $d(I,E):=\inf_{x\in I,y\in E} d(x,y)$.
\end{proposition}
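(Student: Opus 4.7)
The plan is to fix $x\in I=(a,b)$ and set $r:=d(x,E)$. If $r=0$ the claim is trivial, so assume $r>0$; since $E$ is closed, the open ball $B:=(x-r,x+r)$ misses $E$, and $B\cap I$ is a non-empty open $E$-free subinterval of $I$. Any open $E$-free subinterval of $I$ of length $\ell$ contains, for each $\varepsilon>0$, a centered open subinterval of $I\setminus E$ of radius $\ell/2-\varepsilon$, so $\rho(I)\ge|B\cap I|/2$.

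The argument then splits into three cases according to how $B$ sits inside $I$. In case (i), $B\subset I$ (equivalently $x-r\ge a$ and $x+r\le b$), so $\rho(I)\ge r$ and $d(x,E)=r\le\rho(I)$, which is bounded by the right-hand side. In case (ii), exactly one of $x-r<a$ and $x+r>b$ holds; by left--right symmetry, we may assume the latter with $x-r\ge a$, in which case $(x-r,b)\subset I\setminus E$ has length $b-x+r\ge r$, so $\rho(I)\ge r/2$ and $d(x,E)=r\le 2\rho(I)$.

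The delicate case is (iii), where both $x-r<a$ and $x+r>b$. Here $I\subset B$ forces $I$ itself to be $E$-free, so $\rho(I)=|I|/2$; moreover $x-a<r$ and $b-x<r$ imply $a,b\notin E$, whence $E\cap\bar I=\emptyset$. Since $E$ is closed and non-empty and $\bar I$ is compact, the distance $d(I,E)=d(\bar I,E)$ is attained by some $y^{\ast}\in E$ with $y^{\ast}\notin\bar I$; by symmetry we may assume $y^{\ast}>b$, giving $d(I,E)=y^{\ast}-b$ and
\begin{equation*}
d(x,E)\le d(x,y^{\ast})=(y^{\ast}-b)+(b-x)\le d(I,E)+|I|=2\rho(I)\Big(1+\tfrac{d(I,E)}{|I|}\Big).
\end{equation*}

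The principal obstacle is case (iii): once $I$ is entirely $E$-free, the lower bound $\rho(I)\ge r$ coming from $B\cap I$ saturates at $|I|/2$ and gives no direct control on how far $x$ can be from $E$. Instead one must exploit the hypothesis on $d(I,E)$ by transporting from the nearest $E$-point to $\bar I$, across $\bar I$, to $x$; this two-legged estimate is exactly what produces the factor $1+d(I,E)/|I|$ in the statement.
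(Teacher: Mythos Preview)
Your proof is correct. The paper's argument is organized differently: it splits into just two cases according to whether $I\cap E$ is empty or not. When $I\cap E=\emptyset$, the paper notes immediately that $\rho(I)=|I|/2$ and uses the triangle inequality $d(x,E)\le |I|+d(I,E)$ for any $x\in I$; when $I\cap E\neq\emptyset$, it takes the connected component $(a',b')$ of $I\setminus E$ containing $x$ and observes that one of $a',b'$ lies in $E$, so $d(x,E)\le b'-a'\le 2\rho(I)$.

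Your three-way split according to how $B(x,r)$ sits relative to $I$ is a finer decomposition: your case~(iii) is a strict subcase of the paper's ``$I\cap E=\emptyset$'' case, while your cases~(i) and~(ii) cover the remainder (including both $I\cap E\neq\emptyset$ and those instances of $I\cap E=\emptyset$ where $B\not\supset I$). The paper's connected-component observation is marginally more economical, replacing your cases~(i)--(ii) by a single line; conversely, your ball-based viewpoint makes the role of $r=d(x,E)$ explicit throughout. Both routes are elementary and yield the same constants.
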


\begin{proof}
    We fix an open interval $I$ and assume first that $I\cap E=\emptyset$. In this case, given points $x,y\in I$ and $e\in E$, the triangular inequality implies that $d(x,e)\leq d(x,y)+d(y,e)\leq|I|+d(y,e)$, so it follows $d(x,E)\leq|I|+d(I,E)=\left(1+\frac{d(I,E)}{|I|}\right)|I|$ for every $x\in I$.  Since $\rho(I)=\frac{1}{2}|I|$, we find that $d(x,E)\leq2(1+\frac{d(I,E)}{|I|})\rho(I),\:\forall x\in I$. Now, if $I\cap E\neq\emptyset$, let $x\in I\setminus E$. Since $E$ is closed, $x$ must be contained in a connected component of $I\setminus E$ of the form $(a,b)$. We have that $(a,b)\neq I$ as $I\cap E\neq\emptyset$, so either $a\in E$ or $b\in E$. In any case, we have that $d(x,E)\leq b-a\leq2\rho(I)$, so \eqref{eq4.0} holds.
\end{proof}

\begin{lemma}
    \label{lemma4.1}
    Let $E$ be a closed non-empty subset of $\mathbb{R}$ with zero Lebesgue measure and fix $\eta>0$. Then, there exists a constant $C_0=C_0(\eta)>0$ such that for every open interval $I$ for which $d(I,E)\leq\eta|I|$, we have that
    \begin{equation}
        \label{eq4.1}
        \rho(I^+)\geq C_0\:d(x,E),\:\text{for every } x\in I^+.
    \end{equation}
    In particular, since $|E|=0$, $\rho(I^+)^{-\alpha}\leq C(\alpha,\eta)\:\text{ess inf}_{x\in I^+}d(x,E)^{-\alpha}$ for every $\alpha>0$.
\end{lemma}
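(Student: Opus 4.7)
The plan is to apply Proposition~\ref{lemma4.0} (the previous result) to the sub-interval $I^+$: this directly gives an upper bound of $d(x,E)$ by a multiple of $\rho(I^+)$ that involves the ratio $d(I^+,E)/|I^+|$, so the remaining work reduces to showing that this ratio is bounded by a constant depending only on $\eta$. Explicitly, applied to $I^+$ the previous proposition yields
\[
d(x,E) \leq 2\Big(1 + \frac{d(I^+, E)}{|I^+|}\Big)\rho(I^+), \quad \text{for every } x \in I^+,
\]
so it will suffice to produce a constant $C(\eta)$ with $d(I^+, E) \leq C(\eta)\,|I^+|$, after which $C_0 := (2(1+C(\eta)))^{-1}$ will do the job. (The fact that $I^+$ is half-open rather than open is a non-issue, since $\rho$ and $d(\cdot,E)$ are unaffected by the one-point difference between $I^+$ and its interior.)

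To bound $d(I^+,E)$ I will use a simple triangle-inequality argument. For any $\varepsilon>0$, pick $e\in E$ with $d(I,e)\leq d(I,E)+\varepsilon$. A short case check on the three possible positions of $e$ relative to $I=(a,b)$---namely $e\leq a$, $e\geq b$, or $a<e<b$---shows that the farthest point of $I^+$ from $e$ lies at distance at most $|I|+d(I,e)$ from $e$; in the first case this is tight, in the second case one gets even $d(I,e)+|I|/2$, and in the third $d(I,e)=0$ and one uses $|I|$ as crude bound. Letting $\varepsilon\to 0^+$ and invoking the hypothesis $d(I,E)\leq\eta|I|$ then yields $d(I^+,E)\leq(1+\eta)|I|=2(1+\eta)|I^+|$, so one can take $C(\eta)=2(1+\eta)$ and $C_0(\eta)=\bigl(2(3+2\eta)\bigr)^{-1}$.

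For the ``in particular'' assertion, the hypothesis $|E|=0$ together with $E$ being closed forces $d(x,E)>0$ for almost every $x\in I^+$, so the pointwise bound $\rho(I^+) \geq C_0\,d(x,E)$ may be raised to the power $-\alpha$ to give $\rho(I^+)^{-\alpha}\leq C_0^{-\alpha}\,d(x,E)^{-\alpha}$ a.e. on $I^+$; since the left-hand side is independent of $x$, taking the essential infimum on the right yields $\rho(I^+)^{-\alpha}\leq C_0^{-\alpha}\,\text{ess inf}_{x\in I^+}d(x,E)^{-\alpha}$, and $C(\alpha,\eta):=C_0(\eta)^{-\alpha}$ is the desired constant. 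The only step that requires any genuine thought is the uniformity of the triangle-inequality bound $|I|+d(I,e)$ across the three configurations of $e$ relative to $I$; everything else is bookkeeping.
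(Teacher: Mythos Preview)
Your proof is correct and follows essentially the same approach as the paper: both apply Proposition~\ref{lemma4.0} to (the interior of) $I^+$ and then bound $d(I^+,E)/|I^+|$ by $2(1+\eta)$ via a triangle-inequality argument, arriving at the identical constant $C_0=(6+4\eta)^{-1}$. Your case analysis on the position of $e$ is slightly more verbose than needed (one line suffices: $d(I^+,e)\leq d(I,e)+|I^-|$), but the argument and conclusion match the paper's exactly.
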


\begin{proof}
    Observe first that getting \eqref{eq4.1} is equivalent to showing that $\rho(\mathring{I^+})\geq C_0\:d(x,E)$ for every $x\in\mathring{I^+}$, where $\mathring{I^+}$ denotes the interior of $I^+$. Applying Proposition~\ref{lemma4.0} to the open interval $\mathring{I^+}$, we get the inequality
    \begin{align*}
        d(x,E) &\leq 2\Big(1+\frac{d(\mathring{I^+},E)}{|\mathring{I^+}|}\Big)\rho(\mathring{I^+})\\
        &\leq(6+4\eta)\rho(\mathring{I^+})
        \end{align*}
        valid for every $x\in\mathring{I^+}$.
\end{proof}

\begin{lemma}
    \label{lemma4.2}
    Let $E$ be a closed $(\sigma,\gamma,+)$-w.p. set and let $I=(a,b)$. Also, assume that $I^-\cap E\neq\emptyset$ and that $(c,d)\subset I^+\setminus E$ satisfies $d-c=2\rho(I^+)$. Denote by $\tilde{I}=(a,\frac{c+d}{2})$. Then, there exist constants $0<\beta_1,\beta_2<1$ such that
    $$|F(\beta_1\varepsilon)|\leq\beta_2|F(\varepsilon)|$$
    uniformly for every $0<\varepsilon<\rho(I^+)$, where $F(\varepsilon):=E(\varepsilon)\cap\tilde{I}$ and $E(\varepsilon):=\{x\in \mathbb{R}:d(x,E)<\varepsilon\}$.
\end{lemma}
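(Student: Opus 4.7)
The plan is to establish the relative decay by splitting $\tilde I$ into a trivial right piece, dominated by the $E$-free gap $(c,\tilde b)$, and a left piece $[a,c]$ on which the right-sided weak porosity controls the hole distribution.

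First, setting $M := \rho(I^+)$ and $\tilde b := (c+d)/2$, the segment $(c,\tilde b)\subset\tilde I$ is $E$-free of length $M$. For $\varepsilon < M$ and $x\in(c,\tilde b)$ the nearest point of $E$ is $c$, hence $F(\varepsilon)\cap(c,\tilde b) = (c,c+\varepsilon)$ and $F(\beta_1\varepsilon)\cap(c,\tilde b) = (c,c+\beta_1\varepsilon)$, whose measures have ratio exactly $\beta_1$. Thus the lemma reduces to proving $|F_L(\beta_1\varepsilon)| \le \beta_2' |F_L(\varepsilon)|$ for the truncation $F_L(\varepsilon) := F(\varepsilon)\cap[a,c]$ and some $\beta_2'\in(0,1)$; then $\beta_2 := \max\{\beta_1,\beta_2'\}$ does the job.

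For the second step I would decompose $[a,c]\setminus E$ into its open connected components $(\alpha_k,\beta_k)$ of lengths $\ell_k$. Each interior component contributes $\min(\ell_k, 2\varepsilon)$ to $|F_L(\varepsilon)|$, with boundary components (those touching $a$ or $c$) contributing no more. Sorting components at scale $\varepsilon$ into ``big'' ones with $\ell_k\ge 2\varepsilon$ (count $N_\varepsilon$) and ``small'' ones (with total mass $S_\varepsilon:=\sum_{\ell_k<2\varepsilon}\ell_k$), one has, up to $O(\varepsilon)$ boundary corrections,
\[
  |F_L(\varepsilon)| = 2\varepsilon N_\varepsilon + S_\varepsilon \quad\text{and}\quad |F_L(\beta_1\varepsilon)|\le 2\beta_1\varepsilon N_\varepsilon + S_\varepsilon.
\]
The core claim is that right-sided weak porosity forces $S_\varepsilon \le 2K\varepsilon N_\varepsilon$ for some $K=K(\sigma,\gamma)$ independent of $\varepsilon$ and $I$. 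Granting this, a direct computation gives $|F_L(\beta_1\varepsilon)|/|F_L(\varepsilon)|\le(\beta_1+K)/(1+K)<1$, and choosing e.g.\ $\beta_1=1/2$ yields $\beta_2'=(1+2K)/(2(1+K))$, completing the argument.

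The main obstacle is the mass bound $S_\varepsilon \lesssim \varepsilon N_\varepsilon$. The idea is that to each small component $(\alpha_k,\beta_k)$ one can associate, using right-sided weak porosity, a nearby big component to its left: one applies the right-sided weak porosity condition to an auxiliary interval whose right half contains an $E$-free segment of length $\gtrsim \varepsilon$, and the condition produces on the left an $E$-free interval of length $\ge 2\varepsilon$, i.e.\ a big component. The existence of the required right-hand $E$-free segment of length $\gtrsim\varepsilon$ near every location of $[a,c]$ is where the original $2M$-hole $(c,d)$ enters: it can be propagated down to intermediate scales via Lemma~\ref{lemma3.1} and Corollary~\ref{coro3.1}, which guarantee ``comparable'' holes throughout $[a,c]$. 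A Vitali-type bounded-overlap argument then limits how many small components can charge a given big one, yielding the claim. Care is required in the boundary regime $\gamma M<\varepsilon<M$, where one falls back on the consequence $\rho(I^-)\ge\gamma\rho(I^+)$ of Definition~\ref{def3.3} applied to $I$ itself to guarantee at least one big component in $[a,c]$ of size $\gtrsim M$.
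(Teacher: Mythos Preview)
Your overall framework---reducing to the left piece and analysing the complement $[a,c]\setminus E$ through the dichotomy big/small components---is different from the paper's argument, and the crux you identify, the mass bound $S_\varepsilon\le 2K\varepsilon N_\varepsilon$, is not actually established by your sketch. Two concrete problems: first, Corollary~\ref{coro3.1} gives $\rho(J^+)\ge\theta_1^{-1}(|J|/|I|)^{\theta_2}\rho(I^+)$, which \emph{decays} as $|J|$ shrinks, so propagating the hole $(c,d)$ down does not furnish an $E$-free segment of length $\gtrsim\varepsilon$ at an arbitrary location in $[a,c]$ unless you take $|J|$ of some specific (possibly large) size depending on $\varepsilon/M$; second, even if $\rho(J^+)\gtrsim\varepsilon$, the weak porosity condition only yields holes of length $\ge 2\gamma\rho(J^+)$, which need not exceed $2\varepsilon$, so you do not automatically land in a big component. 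The ``Vitali-type bounded-overlap'' step is then left entirely unspecified: you have not said which auxiliary intervals you use, how the association small $\mapsto$ big is defined, or why the total small mass charged to a single big component is $O(\varepsilon)$. (There is also a small slip in the first step: the formula $F(\varepsilon)\cap(c,\tilde b)=(c,c+\varepsilon)$ presumes $c\in E$, which can fail when $c$ is the left endpoint of $I^+$.)

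By contrast, the paper bypasses the complement decomposition entirely and works with the connected components $(a_j,b_j)$ of $F(\varepsilon)$ itself. The key observation is that each right endpoint $b_j$ satisfies $d(b_j,E)\ge\varepsilon$, so $(b_j,b_j+\varepsilon)$ is an $E$-free interval sitting in the right half of the interval $I_j$ with $I_j^-=(a_j,b_j)$. This gives $\rho(I_j^+)\ge\varepsilon/2$ for free---no propagation from $(c,d)$ is needed---and a single application of Definition~\ref{def3.3} to $I_j$ produces $E$-free subintervals of $(a_j,b_j)$ of length $\ge\gamma\varepsilon$ and total length $\ge\sigma(b_j-a_j)$; after trimming by $\varepsilon'=\tfrac{\gamma}{4}\varepsilon$ these lie outside $F(\varepsilon')$, yielding $|F(\varepsilon)\setminus F(\varepsilon')|\ge\min\{\tfrac{3}{8},\tfrac{\sigma}{2}\}|F(\varepsilon)|$ directly. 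The moral is that the components of $E(\varepsilon)$ already carry the $\varepsilon$-scale hole on their right that weak porosity needs, whereas the components of $[a,c]\setminus E$ do not, which is why your route requires the extra (and currently missing) machinery.
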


\begin{proof}
    We begin by fixing $\varepsilon\in(0,\rho(I^+))$ and writing $E(\varepsilon)=\bigcup_{i\in\Gamma}(\tilde{a}_i,\tilde{b}_i)$, where $(\tilde{a}_i,\tilde{b}_i)$ are the at most countable connected components of $E(\varepsilon)$. Notice that, for every $i\in\Gamma$, there exists $e_i\in E$ with $\tilde{a}_i<e_i<\tilde{b}_i$, so $\tilde{b}_i-\tilde{a}_i\geq2\varepsilon$. Now, $F(\varepsilon)=\bigcup_{j=1}^K(a_j,b_j),$
    where each $(a_j,b_j)=(\tilde{a}_i,\tilde{b}_i)\cap\tilde{I}$ for some $i\in\Gamma$ and having being ordered as to make $a_1<a_2<\cdots<a_K$. Furthermore, since $\varepsilon<\rho(I^+)$, there is some neighbourhood $U$ of $\sup\tilde{I}=\frac{c+d}{2}$ which doesn't intersect $E(\varepsilon)$, so $b_K<\sup\tilde{I}$ and $d(b_j,E)\geq\varepsilon$ for every $1\leq j\leq K$. It follows that the $(a_j,b_j)$ are pairwise disjoint and $b_j-a_j\geq2\varepsilon$ unless, perhaps, for $j=1$. Now, take $\varepsilon':=\frac{\gamma}{4}\varepsilon$. We have that
    \begin{align*}
        |F(\varepsilon)\setminus F(\varepsilon')| =\sum_{j=1}^K|(a_j,b_j)\setminus E(\varepsilon')|.
    \end{align*}
    If $b_1-a_1<2\varepsilon$, then $a_1=a$ and since $d(b_1,E)\geq\varepsilon$, it follows that $(b_1+\varepsilon'-\varepsilon,b_1)=(b_1+\varepsilon[\frac{\gamma}{4}-1],b_1)$ does not intersect with $E(\varepsilon')$. Let us observe that in the case that $b_1+\varepsilon[\frac{\gamma}{4}-1]\leq a$, we must have that $|(a_1,b_1)\setminus E(\varepsilon')|=|(a_1,b_1)|$. On the other hand, if $b_1+\varepsilon[\frac{\gamma}{4}-1]>a$, we can estimate
    $$|(a_1,b_1)\setminus E(\varepsilon')|\geq|(b_1+\varepsilon[\tfrac{\gamma}{4}-1],b_1)|=\varepsilon[1-\tfrac{\gamma}{4}]\frac{|(a_1,b_1)|}{|(a_1,b_1)|}\geq\frac{3}{8}(b_1-a_1).$$
    We now estimate $|(a_j,b_j)\setminus E(\varepsilon')|$ for $1\leq j\leq K$ in the general case where $b_j-a_j\geq2\varepsilon$. Set $I_j$ as the open interval for which $I_j^-=(a_j,b_j)$. Recalling once again that $d(b_j,E)\geq\varepsilon$, we find that $(b_j,b_j+\varepsilon)\subset I^+_j$ and $(b_j,b_j+\varepsilon)\cap E=\emptyset$. This implies that $\rho(I^+_j)\geq\frac{1}{2}\varepsilon$ so, because $E$ is $(\sigma,\gamma,+)$-w.p., there exists a collection of pairwise disjoint open intervals $\{J^j_i\}_{i=1}^N$, each contained in $I_j^-\setminus E$, such that $|J^j_i|\geq2\gamma\rho(I^+_j)\geq\gamma\varepsilon$ and $\sum_{i=1}^N|J^j_i|\geq\sigma|I^-_j|$. Notice that $|J^j_i\setminus E(\varepsilon')|\geq\frac{1}{2}|J^j_i|$. In consequence,
    \begin{align*}
        |(a_j,b_j)\setminus E(\varepsilon')|\geq\sum_{i=1}^N|J^j_i\setminus E(\varepsilon')|\geq\frac{1}{2}\sum_{i=1}^N|J^j_i|\geq\frac{\sigma}{2}|I^-_j|=\frac{\sigma}{2}(b_j-a_j).
    \end{align*}
    From this, we finally get
    $$|F(\varepsilon)\setminus F(\varepsilon')|\geq\min\Big\{\frac{3}{8},\frac{\sigma}{2}\Big\}\sum_{j=1}^K|(a_j,b_j)|=:(1-\beta_2)|F(\varepsilon)|.$$
    The statement now follows by setting $\beta_1:=\frac{\gamma}{4}$.
\end{proof}

The next is an interesting result concerning the Hausdorff dimension $\text{dim}_\text{H}E$ of a set $E$ satisfying Definition~\ref{def3.3}. 

\begin{corollary}
    If $E$ is a right-sided weakly porous set, then $\text{dim}_\text{H}\:E\leq1-\frac{\log\beta_2}{\log\beta_1}$, where $\beta_1$ and $\beta_2$ are the constants appearing in Lemma~\ref{lemma4.2}.
\end{corollary}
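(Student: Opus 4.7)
The plan is to upgrade the geometric decay in Lemma~\ref{lemma4.2} into a power-type estimate $|F(\varepsilon)|\leq C\varepsilon^s$ with $s=\log\beta_2/\log\beta_1$, and then read off the bound on Hausdorff dimension via the upper Minkowski dimension. By Proposition~\ref{prop3.1}(i) I may assume $E$ is closed, and by the countable stability of $\dim_\mathrm{H}$ (together with $\mathbb{R}=\bigcup_n[-n,n]$) it suffices to prove $\dim_\mathrm{H}(E\cap K)\leq 1-s$ for every bounded interval $K\subset\mathbb{R}$.

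The first step is to set up Lemma~\ref{lemma4.2} so that its interval $\tilde I$ covers a prescribed $K$. I take $I=(a,b)$ large enough that $K\subset I^-$. Then $I^-\cap E\neq\emptyset$ whenever $E\cap K\neq\emptyset$ (the only case that matters), and $\rho(I^+)>0$ because $I^+$ is open while $|E|=0$ by Proposition~\ref{prop3.1}(ii). Since the $(c,d)$ produced by Lemma~\ref{lemma4.2} lies in $I^+$, one has $\tilde I\supset I^-\supset K$.

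Next, iterating $|F(\beta_1\varepsilon)|\leq\beta_2|F(\varepsilon)|$ from a starting value $\varepsilon_0\in(0,\rho(I^+))$ gives $|F(\beta_1^n\varepsilon_0)|\leq\beta_2^n|F(\varepsilon_0)|$; for an arbitrary $\varepsilon\in(0,\varepsilon_0]$, picking $n$ with $\beta_1^{n+1}\varepsilon_0<\varepsilon\leq\beta_1^n\varepsilon_0$ and using monotonicity of $F$ yields $|F(\varepsilon)|\leq C\varepsilon^s$, with $s=\log\beta_2/\log\beta_1>0$ (both logarithms are negative). To pass from this decay to a dimension bound I take a maximal $\varepsilon$-separated subset of $E\cap\tilde I$: the disjoint $\varepsilon/2$-neighbourhoods of its points sit inside $F(\varepsilon/2)$, so its cardinality is at most $2|F(\varepsilon/2)|/\varepsilon\leq C'\varepsilon^{s-1}$. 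This forces $\overline{\dim}_\mathrm{M}(E\cap\tilde I)\leq 1-s$, and since $\dim_\mathrm{H}\leq\overline{\dim}_\mathrm{M}$ the desired bound on $E\cap K$ follows.

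The main technical content is already encapsulated in Lemma~\ref{lemma4.2}; the obstacles in this plan are organizational: arranging the hypotheses of that lemma so that $\tilde I$ contains an arbitrary compact $K$ (handled by the inclusion $I^-\subset\tilde I$), and ensuring the iteration extends to arbitrarily small $\varepsilon$ (handled by $\rho(I^+)>0$). If $s\geq 1$ the stated bound $1-s$ is non-positive, but then the argument shows $\dim_\mathrm{H}E=0$, which remains consistent with the statement.
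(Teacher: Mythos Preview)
Your proof is correct and follows essentially the same route as the paper: iterate Lemma~\ref{lemma4.2} to obtain a power-type bound $|F(\varepsilon)|\lesssim\varepsilon^{s}$ with $s=\log\beta_2/\log\beta_1$, then bound the upper Minkowski dimension of $E$ restricted to an arbitrary bounded interval and invoke $\dim_\mathrm{H}\leq\overline{\dim}_\mathrm{M}$. The only cosmetic difference is that the paper reads off $\overline{\dim}_\mathrm{M}$ from the volume-of-neighbourhood formula $\overline{\dim}_\mathrm{M}\Omega=\sup\{t:\limsup_{\varepsilon\to0}|\Omega(\varepsilon)|/\varepsilon^{1-t}>0\}$ (citing Mattila), whereas you use the equivalent packing-number characterization; your setup $K\subset I^-\subset\tilde I$ is exactly the paper's choice $I^-=(a-1,b+1)\supset J$ in different clothing.
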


\begin{proof}
    Fix an open interval $I$ and consider the function $\phi_I:(0,1)\to\mathbb{R}$ given by $\phi_I(t)=|F(t\rho(I^+))|$, where $F(\varepsilon)=E(\varepsilon)\cap\tilde{I}$ as before. Then, $\phi_I$ is non-decreasing, $\phi_I(t)\leq|I|$, and Lemma~\ref{lemma4.2} tell us that $\phi_I(\beta_1t)\leq\beta_2\phi_I(t)$ for every $t\in(0,1)$. Fixed $t$, choose $m$ as the integer such that $\frac{t}{\beta_1^m}<1\leq\frac{t}{\beta_1^{m+1}}$ (if and only if $m<\frac{\log t}{\log \beta_1}\leq m+1$). Then, it follows that $\phi_I(t)=\phi_I\Big(\beta_1^m\frac{t}{\beta_1^m}\Big)\leq\beta_2^m\phi_I\Big(\frac{t}{\beta_1^m}\Big)\leq\beta_2^m|I|\leq\frac{1}{\beta_2}t^{\frac{\log\beta_2}{\log\beta_1}}|I|$. Observing that $|F(\varepsilon)|=\phi_I(\frac{\varepsilon}{\rho(I^+)})$, we get
    \begin{equation}
    \label{eq4.3}
        |F(\varepsilon)|\leq \frac{|I|}{\beta_2}\Big(\frac{\varepsilon}{\rho(I^+)}\Big)^{\alpha_0},
    \end{equation}
    for every $0<\varepsilon<\rho(I^+)$ and setting $\alpha_0=\frac{\log\beta_2}{\log\beta_1}$. We can obtain a bound on the Hausdorff dimension of $E$ using this in the following manner. Considering the intersection of the set $E$ with some arbitrary interval $J$, it is known that $\text{dim}_H(E\cap J)\leq\overline{\text{dim}}_M(E\cap J)$, where $\overline{\text{dim}}_M$ refers to the upper Minkowski dimension. For a bounded set $\Omega\subset\mathbb{R}$, this dimension can be estimated by the formula $\overline{\text{dim}}_M\Omega =\sup\big\{s>0:\limsup_{\varepsilon\to0}\frac{|\Omega(\varepsilon)|}{\varepsilon^{1-s}}>0\big\}$, see \cite{MATTILA}. In order to apply \eqref{eq4.3} and estimate the Minkowski dimension of $E\cap J$, suppose $J$ has endpoints $a<b$ and set $I$ as an open interval such that $I^-=(a-1,b+1)$. Also, let $(c,d)$ be an interval contained in $I^+$ such that $d-c=2\rho(I^+)$. Then, we have that $(E\cap J)(\varepsilon)\subset E(\varepsilon)\cap(a-1,\frac{c+d}{2})=E(\varepsilon)\cap\tilde{I}=F(\varepsilon)$ for every $0<\varepsilon<1$. Now, for every $s>1-\alpha_0$ and $0<\varepsilon<\min\{1,\rho(I^+)\}$,
    \begin{align*}
        \frac{|(E\cap J)(\varepsilon)|}{\varepsilon^{1-s}}\leq\frac{|F(\varepsilon)|}{\varepsilon^{1-s}}\leq C(\sigma,\gamma,I)\:\varepsilon^{s+\alpha_0-1}\to0\text{ as }\varepsilon\to0.
    \end{align*}
    This shows that $\overline{\text{dim}}_M(E\cap J)\leq1-\alpha_0$ for every interval $J$. Thus, $\text{dim}_HE\leq1-\alpha_0$.
\end{proof}

\begin{theorem}
    \label{theorem4.1}
    Being $E$ a non-empty subset of $\mathbb{R}$, the following statements are equivalent.
    \begin{itemize}
        \item[I.] There is some $\alpha>0$ such that $d(\cdot,E)^{-\alpha}\in A_1^+(\mathbb{R})\cap L^1_\text{loc}(\mathbb{R})$.  
        \item[II.] $E$ is right-sided w.p.
    \end{itemize}
\end{theorem}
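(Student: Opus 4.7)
The proof splits into two implications. Throughout, Proposition~\ref{prop3.1} together with $d(\cdot, E) = d(\cdot, \bar E)$ lets us assume $E$ is closed; the hypothesis $d(\cdot, E)^{-\alpha} \in L^1_\text{loc}(\mathbb{R})$ in direction I, and Proposition~\ref{prop3.1} in direction II, both force $|E| = 0$.

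For $\text{II} \Rightarrow \text{I}$, I would fix $\alpha \in (0, \alpha_0)$ with $\alpha_0 = \log\beta_2/\log\beta_1$, using the polynomial bound \eqref{eq4.3} extracted from Lemma~\ref{lemma4.2} via iteration. In the main regime (where $I^- \cap E \neq \emptyset$ and $d(I, E) \leq |I|$), the plan is to apply the layer-cake formula
\begin{equation*}
\int_{I^-} d(y, E)^{-\alpha} dy = \alpha \int_0^\infty \varepsilon^{-\alpha - 1} |I^- \cap E(\varepsilon)| d\varepsilon,
\end{equation*}
split the outer integral at $\rho(I^+)$, apply \eqref{eq4.3} (noting $I^- \subset \tilde I$) on the inner piece---convergent because $\alpha < \alpha_0$---and the trivial bound $|I^- \cap E(\varepsilon)| \leq |I^-|$ on the outer, reaching $\int_{I^-} d(y, E)^{-\alpha} dy \leq C |I^-| \rho(I^+)^{-\alpha}$. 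Lemma~\ref{lemma4.1}, applicable since $d(I, E) \leq |I|$, then yields $\rho(I^+)^{-\alpha} \leq C \text{ess inf}_{x \in I^+} d(x, E)^{-\alpha}$, closing item (ii) of the $A_1^+$ characterization of Section~\ref{section2}; local integrability is immediate since any bounded interval lies inside some $I^-$.

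For $\text{I} \Rightarrow \text{II}$, fix an open interval $I = (a, b)$, let $(c, d) \subset I^+$ be a maximal $E$-free hole of length $2\rho(I^+)$ (of positive length since $E$ is closed and $|E|=0$, hence nowhere dense), and set $m = (c + d)/2$. On the sub-interval $(m - \rho(I^+)/2, m) \subset (c, d)$ one has $d(\cdot, E) \geq \rho(I^+)/2$, so item (iii) of the characterization proposition applied to the triple $a < m - \rho(I^+)/2 < m$ yields
\begin{equation*}
\int_{I^-} d(y, E)^{-\alpha} dy \leq C(m - a)\rho(I^+)^{-\alpha} \leq C|I|\rho(I^+)^{-\alpha}.
\end{equation*}
Chebyshev's inequality then gives $|I^- \cap E(\varepsilon)| \leq C' |I^-|(\varepsilon/\rho(I^+))^\alpha$ for all $\varepsilon > 0$. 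Decompose $I^- \setminus E$ into its at-most-countably-many connected components $K_i$; every $K_i$ except possibly $I^-$ itself (which alone already satisfies Definition~\ref{def3.3} since $|I^-| \geq 2\rho(I^+)$) has at least one endpoint in $E$, so $K_i \subset E(|K_i|)$. Choosing $\gamma$ small enough that $C'(2\gamma)^\alpha \leq 1/2$, the union of the small components (those with $|K_i| < 2\gamma\rho(I^+)$) lies in $I^- \cap E(2\gamma\rho(I^+))$ and so has total measure at most $|I^-|/2$; the large components thus have total measure at least $|I^-|/2$, verifying Definition~\ref{def3.3} with $\sigma = 1/2$ and the chosen $\gamma$.

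The principal obstacle appears in direction $\text{II} \Rightarrow \text{I}$, because Lemma~\ref{lemma4.2} explicitly requires $I^- \cap E \neq \emptyset$. Two auxiliary regimes therefore need separate handling: when $d(I, E) > |I|$, the triangle inequality shows that $d(\cdot, E)$ varies by at most a factor of $2$ across $I$, so the $A_1^+$ condition is immediate; when $I^- \cap E = \emptyset$ but $d(I, E) \leq |I|$, the distance $d(\cdot, E)$ on $I^-$ is governed by the nearest $E$-point lying just outside $I^-$, so $\int_{I^-} d(y, E)^{-\alpha} dy$ reduces to the integral of a translated power and can be compared directly to $\text{ess inf}_{x \in I^+} d(x, E)^{-\alpha}$ by an elementary computation exploiting $|I^-| = |I|/2$.
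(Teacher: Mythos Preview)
Your proposal is correct and follows essentially the same route as the paper. Both directions match: for $\text{I}\Rightarrow\text{II}$ you bound the total length of the ``small'' components of $I^-\setminus E$ by integrating $d(\cdot,E)^{-\alpha}$ and comparing to $\rho(I^+)^{-\alpha}$ (the paper does this directly over the set $\Omega$, you route it through item~(iii) and Chebyshev, but the estimate is identical); for $\text{II}\Rightarrow\text{I}$ you use the same three-case split, and in the main case your layer-cake argument with the iterated bound~\eqref{eq4.3} is just the continuous version of the paper's dyadic sum over the annuli $F(\tfrac12\beta_1^k\rho^+)\setminus F(\tfrac12\beta_1^{k+1}\rho^+)$.
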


\begin{proof}
    \noindent(I$\implies$II) By Proposition~\ref{prop3.1} \textit{i.} and the fact that $d(\cdot,E)=d(\cdot,\Bar{E})$ there is no loss of generality by assuming that $E$ is a closed set. Since $w(x):=d(x,E)^{-\alpha}\in L^1_\text{loc}(\mathbb{R})$ and $w_{|_E}=\infty$, it follows that $|E|=0$. Let $\gamma$ be a number in $(0,1)$ to be chosen later and, fixed some open interval $I$, write
    $$I^-\setminus E=\bigcup_{j\in\Gamma}(a_j,b_j),$$
    where the intervals $(a_j,b_j)$ are pairwise disjoint. Next, we set $\Omega:=\{y\in I^-\setminus E:y\in(a_j,b_j)\text{ with }b_j-a_j<2\gamma\rho(I^+)\}$. Notice that $E$ satisfies the right-sided w.p. condition on $I$ with constants $\sigma,\gamma\in(0,1)$ if and only if $|(I^-\setminus E)\setminus\Omega|\geq\sigma|I^-|$, so it suffices to verify this bound for an appropriate value of $\sigma$.

    If $\Omega=\emptyset$, then $|(I^-\setminus E)\setminus\Omega|=|I^-|$ and the required inequality holds for any choice of $\sigma\in(0,1)$. If instead $\Omega\neq\emptyset$, take $x\in\Omega$ to be an arbitrary point and let $j\in\Gamma$ be the index such that $(a_j,b_j)$ contains $x$. Then, $(a_j,b_j)\neq I^-$, since
    $$b_j-a_j<2\gamma\rho(I^+)\leq\gamma|I^+|=\gamma|I^-|.$$
    It follows that either $a_j\in E$ or $b_j\in E$. Therefore, $d(x,E)\leq b_j-a_j<2\gamma \rho(I^+)$, or equivalently $\rho(I^+)^{-\alpha}<(2\gamma)^\alpha d(x,E)^{-\alpha}$, and this holds true for every $x\in\Omega$. Starting from this and integrating on $\Omega$ we get
    \begin{align*}
        \rho(I^+)^{-\alpha}\frac{|\Omega|}{|I^-|} < \frac{(2\gamma)^\alpha}{|I^-|} \int_\Omega d(x,E)^{-\alpha}dx&\leq \frac{(2\gamma)^\alpha}{|I^-|} \int_{I^-} d(x,E)^{-\alpha}dx \\
        &\leq  (2\gamma)^\alpha[w]_{A_1^+(\mathbb{R})}\:\text{ess inf}_{x\in I^+}\:d(x,E)^{-\alpha} \\
        &\leq  (2\gamma)^\alpha[w]_{A_1^+(\mathbb{R})}\:d(z,E)^{-\alpha} \\
        &= C(\alpha) \gamma^\alpha\:\rho(I^+)^{-\alpha},
    \end{align*}

    \noindent where $z$ denotes the center of some interval $J\subset I^+\setminus E$ with $|J|=2\rho(I^+)$. Simplifying and recalling that $|E|=0$, we find
    $$|\Omega|<C(\alpha)\gamma^\alpha|I^-|\iff (1-C(\alpha)\gamma^\alpha)|I^-|<|(I^-\setminus E)\setminus\Omega|,$$
    \noindent so it suffices to pick $\gamma\in(0,1)$ sufficiently small as to make $C(\alpha)\gamma^\alpha<1$ and then setting $\sigma=1-C(\alpha)\gamma^\alpha$ to achieve the sought inequality. 

    \noindent(II$\implies$I) Assume now that $E$ is a closed $(\sigma,\gamma,+)$-w.p. set and fix an open interval $I$. We must find constants $\alpha,C>0$, both independent from $I$, such that
    $\fint_{I^-}d(x,E)^{-\alpha}dx\leq C\:\text{inf ess}_{x\in I^+}d(x,E)^{-\alpha}$. Once this inequality is verified, local integrability follows from the fact that $\text{inf ess}_{x\in I^+}d(x,E)^{-\alpha}\leq d(z,E)^{-\alpha}=\rho(I^+)^{-\alpha}<\infty$, where $z$ is the center of some interval $J\subset I^+\setminus E$ with $|J|=2\rho(I^+)$. We split the problem into three cases depending on the nature of the interval $I$.
    \vspace{0.2cm}
    
    \noindent\textbf{Case 1} ($ I^-\cap E=\emptyset\land d(I,E)>2|I|$): for every $x\in I^-$ and $y\in I^+$ the triangular inequality implies that $d(y,E)-|x-y|\leq d(x,E)$. From this, we find that
    \begin{align*}
        d(x,E) \geq d(y,E)-|x-y|\geq d(y,E)-|I|\geq d(y,E)-\frac{1}{2} d(I^-,E) \geq \frac{1}{2} d(y,E).
    \end{align*}
    For $x\in I^-$ fixed, this means that $d(x,E)^{-\alpha}\leq 2^\alpha\:\text{ess inf}_{y\in I^+}d(y,E)^{-\alpha}$ for any $\alpha>0$ and thus
    $$\fint_{I^-}d(x,E)^{-\alpha}dx\leq 2^\alpha\:\text{ess inf}_{y\in I^+}d(y,E)^{-\alpha}.$$

    \noindent\textbf{Case 2} ($ I^-\cap E=\emptyset\land d(I,E)\leq2|I|$): write $I^-=(a,b)$ and begin by making the following estimates assuming $0<\alpha<1$.
    \begin{align*}
        \fint_{I^-}d(x,E)^{-\alpha}dx \leq \frac{1}{b-a}\int_a^bd(x,\partial I^-)^{-\alpha}dx &=\frac{2}{b-a}\int_a^{\frac{a+b}{2}}(x-a)^{-\alpha}dx \\
        &= \frac{2}{b-a}\int_0^{\frac{b-a}{2}}u^{-\alpha}du= (1-\alpha)\Big(\frac{b-a}{2}\Big)^{-\alpha}.
    \end{align*}
    Since $b-a=|I^-|=|I^+|\geq2\rho(I^+)$, we have
    \begin{equation}
        \label{eq4.2}
        \fint_{I^-}d(x,E)^{-\alpha}dx\leq(1-\alpha)\Big(\frac{b-a}{2}\Big)^{-\alpha}\leq C(\alpha)\:\rho(I^+)^{-\alpha}.
    \end{equation}
    The $A_1^+$ condition in this case now follows from Lemma~\ref{lemma4.1}. \\
    
    \noindent\textbf{Case 3} ($ I^-\cap E\neq\emptyset$): as in the previous case, it suffices to check that $\fint_{I^-}d(x,E)^{-\alpha}dx\lesssim\rho(I^+)^{-\alpha}$, from which the $A_1^+$ condition follows by applying Lemma~\ref{lemma4.1}. To get this bound, let us start by considering the collection of sets $\{F(\varepsilon)\}_{\varepsilon>0}$ introduced in Lemma~\ref{lemma4.2} and take $0<\alpha<1$ to be chosen later. Then,
    \begin{align*}
        \fint_{I^-}d(x,E)^{-\alpha}dx&\leq\frac{1}{|I^-|}\Big(\int_{F(\frac{1}{2}\rho^+)}d(x,E)^{-\alpha}dx+\int_{I^-\setminus F(\frac{1}{2}\rho^+)}d(x,E)^{-\alpha}dx\Big) \\
        &\leq\frac{1}{|I^-|}\int_{F(\frac{1}{2}\rho^+)}d(x,E)^{-\alpha}dx+\Big(\frac{\rho^+}{2}\Big)^{-\alpha},
    \end{align*}
    where $\rho^+:=\rho(I^+)$. For the remaining integral  in the last inequality we make
    \begin{align*}
        \int_{F(\frac{1}{2}\rho^+)}d(x,E)^{-\alpha}dx &=\sum_{k=0}^\infty\int_{F(\frac{1}{2}\beta_1^k\rho^+)\setminus F(\frac{1}{2}\beta_1^{k+1}\rho^+)}d(x,E)^{-\alpha}dx \\
        &\leq\Big(\frac{\beta_1}{2}\rho^+\Big)^{-\alpha}\sum_{k=0}^\infty\beta_1^{-\alpha k}|F(\tfrac{1}{2}\beta_1^k\rho^+)| \\
        &\leq\Big(\frac{\beta_1}{2}\rho^+\Big)^{-\alpha}|F(\tfrac{1}{2}\rho^+)|\sum_{k=0}^\infty(\beta_1^{-\alpha}\beta_2)^k \\
        &\leq C(\alpha)|I|\rho(I^+)^{-\alpha},
    \end{align*}
    where $\alpha>0$ was chosen sufficiently small as to make $\beta_1^{-\alpha}\beta_2<1$. We have shown the existence of constants $\alpha$ and $C$ such that $\fint_{I^-}d(x,E)^{-\alpha}dx\leq C\:\rho(I^+)^{-\alpha}$ for the remaining case, which proves the $A_1^+$ condition of the weight $d(\cdot,E)^{-\alpha}$.
\end{proof}


\begin{thebibliography}{10}
	
	\bibitem{NOSOTROS}
	Hugo Aimar, Ivana Gómez, and Ignacio~Gómez Vargas.
	\newblock Weakly porous sets and {$A_1$} {M}uckenhoupt weights in spaces of
	homogeneous type, 2024.
	\newblock arXiv 2406.14369.
	
	\bibitem{ANDERSON}
	Theresa~C. Anderson, Juha Lehrb\"{a}ck, Carlos Mudarra, and Antti~V.
	V\"{a}h\"{a}kangas.
	\newblock Weakly porous sets and {M}uckenhoupt {$A_p$} distance functions.
	\newblock {\em J. Funct. Anal.}, 287(8):Paper No. 110558, 34, 2024.
	
	\bibitem{LOPEZ}
	Ricardo~G. Dur\'{a}n and Fernando L\'{o}pez~Garc\'{\i}a.
	\newblock Solutions of the divergence and analysis of the {S}tokes equations in
	planar {H}\"{o}lder-{$\alpha$} domains.
	\newblock {\em Math. Models Methods Appl. Sci.}, 20(1):95--120, 2010.
	
	\bibitem{DYDA}
	Bart{\l}omiej Dyda, Lizaveta Ihnatsyeva, Juha Lehrb\"{a}ck, Heli Tuominen, and
	Antti~V. V\"{a}h\"{a}kangas.
	\newblock Muckenhoupt {$A_p$}-properties of distance functions and applications
	to {H}ardy-{S}obolev--type inequalities.
	\newblock {\em Potential Anal.}, 50(1):83--105, 2019.
	
	\bibitem{FALCONER}
	Kenneth Falconer.
	\newblock {\em Fractal geometry}.
	\newblock John Wiley \& Sons, Ltd., Chichester, third edition, 2014.
	\newblock Mathematical foundations and applications.
	
	\bibitem{GarciaCuervaRubioFranciaBook}
	Jos\'{e} Garc\'{\i}a-Cuerva and Jos\'{e}~L. Rubio~de Francia.
	\newblock {\em Weighted norm inequalities and related topics}, volume 116 of
	{\em North-Holland Mathematics Studies}.
	\newblock North-Holland Publishing Co., Amsterdam, 1985.
	\newblock Notas de Matem\'{a}tica [Mathematical Notes], 104.
	
	\bibitem{KICO}
	F.~J. Mart\'{\i}n-Reyes, P.~Ortega~Salvador, and A.~de~la Torre.
	\newblock Weighted inequalities for one-sided maximal functions.
	\newblock {\em Trans. Amer. Math. Soc.}, 319(2):517--534, 1990.
	
	\bibitem{MATTILA}
	Pertti Mattila.
	\newblock {\em Geometry of sets and measures in {E}uclidean spaces}, volume~44
	of {\em Cambridge Studies in Advanced Mathematics}.
	\newblock Cambridge University Press, Cambridge, 1995.
	\newblock Fractals and rectifiability.
	
	\bibitem{MUDARRA}
	Carlos Mudarra.
	\newblock Weak porosity on metric measure spaces, 2024.
	\newblock arXiv 2306.11419.
	
	\bibitem{SAWYER}
	E.~Sawyer.
	\newblock Weighted inequalities for the one-sided {H}ardy-{L}ittlewood maximal
	functions.
	\newblock {\em Trans. Amer. Math. Soc.}, 297(1):53--61, 1986.
	
	\bibitem{SHMERKIN}
	Pablo Shmerkin.
	\newblock Porosity, dimension, and local entropies: a survey.
	\newblock {\em Rev. Un. Mat. Argentina}, 52(2):81--103, 2011.
	
	\bibitem{VASIN}
	A.~V. Vasin.
	\newblock The limit set of a {F}uchsian group and the {D}yn{'}kin lemma.
	\newblock {\em Zap. Nauchn. Sem. S.-Peterburg. Otdel. Mat. Inst. Steklov.
		(POMI)}, 303(Issled. po Line\u{\i}n. Oper. i Teor. Funkts. 31):89--101, 322,
	2003.
	
\end{thebibliography}


%
%


\subsection*{Acknowledgements}
This work was supported by Consejo Nacional de Investigaciones Cient\'ificas y T\'ecnicas-CONICET in Argentina. 


\bigskip

\bigskip

%
\noindent{\textit{Affiliation~1.} 
	\textsc{Instituto de Matem\'{a}tica Aplicada del Litoral ``Dra. Eleonor Harboure'', CONICET, UNL.}

	\noindent \textit{Address.} \textmd{IMAL, Streets F.~Leloir and A.P.~Calder\'on, CCT CONICET Santa Fe, Predio ``Alberto Cassano'', Colectora Ruta Nac.~168 km~0, Paraje El Pozo, S3007ABA Santa Fe, Argentina.}
	
	%
	\noindent \textit{E-mail:} \verb|haimar@santafe-conicet.gov.ar| \\ \hspace*{1.3cm} \verb|ivanagomez@santafe-conicet.gov.ar| \\ \hspace*{1.3cm} \verb|ignaciogomez@santafe-conicet.gov.ar|
}

\medskip
\noindent{\textit{Affiliation~2.} 
	\textsc{Departamento de Análisis Matemático, Estadística e Investigación Operativa y Matemática Aplicada, Facultad de Ciencias, Universidad de Málaga,  Málaga, Spain}

	%
	\noindent \textit{E-mail:} \verb|martin_reyes@uma.es| 
}

\end{document}